\newtheorem{theorem}{Theorem}[section]
\newtheorem{lemma}{Lemma}[section]
\newcommand{\N}{\mathbb{N}}
\newcommand{\R}{\mathbb{R}}
\newcommand{\C}{\mathbb{C}}
\newcommand{\grad}{\nabla}
\begin{document}

\begin{flushleft}
\Large 
\noindent{\bf \Large Direct sampling method via Landweber iteration for an absorbing scatterer with a conductive boundary}
\end{flushleft}

\vspace{0.2in}

{\bf  \large Rafael Ceja Ayala and Isaac Harris}\\
\indent {\small Department of Mathematics, Purdue University, West Lafayette, IN 47907 }\\
\indent {\small Email:  \texttt{rcejaaya@purdue.edu}  and \texttt{harri814@purdue.edu} }\\

{\bf  \large Andreas Kleefeld}\\
\indent {\small Forschungszentrum J\"{u}lich GmbH, J\"{u}lich Supercomputing Centre, } \\
\indent {\small Wilhelm-Johnen-Stra{\ss}e, 52425 J\"{u}lich, Germany}\\
\indent {\small University of Applied Sciences Aachen, Faculty of Medical Engineering and } \\
\indent {\small Technomathematics, Heinrich-Mu\ss{}mann-Str. 1, 52428 J\"{u}lich, Germany}\\
\indent {\small Email: \texttt{a.kleefeld@fz-juelich.de}}\\


\begin{abstract}
\noindent In this paper, we consider the inverse shape problem of recovering isotropic scatterers with a conductive boundary condition. Here, we assume that the measured far-field data is known at a fixed wave number. Motivated by recent work, we study a new direct sampling indicator based on the Landweber iteration and the factorization method. Therefore, we prove the connection between these reconstruction methods. The method studied here falls under the category of  qualitative reconstruction methods where an imaging function is used to recover the absorbing scatterer. We prove stability of our new imaging function as well as derive a discrepancy principle for recovering the regularization parameter. The theoretical results are verified with numerical examples to show how the reconstruction performs by the new Landweber direct sampling method. 
\end{abstract}

\section{Introduction}
In this paper, we provide the analytical framework for recovering extended isotropic scatterers using a new direct sampling imaging function based on the Landweber regularization method. The isotropic scatterers have a conductive boundary condition that models an object that has a thin layer covering the exterior (such as an  aluminum sheet). For a fixed wave number we will assume that the so-called far-field pattern is measured. With this, we will use {\it qualitative reconstruction methods} to recover the unknown scatterer from the far-field pattern. Qualitative methods have been used in many inverse scattering problems \cite{ammari1,GLSM,fmconductbc,harris-nguyen,w/Jake,RTM-gibc,Liu,nf-meng} (and many more) due to the fact that they serve for nondestructive testing. The main advantage for using qualitative methods is the fact that little a priori information about the scatterer is needed. 
For many applications such as medical imaging this is very useful since one does not have much a priori knowledge of the unknown scatterer.

We consider reconstructing extended scatterers using an analogous method to the Direct Sampling Method (DSM). Here, we assume that we have the far-field operator i.e. we have the measured far-field pattern for all sources and receivers along the unit circle/sphere. See for e.g. \cite{BS1,Ito2012,Ito2013,BS4,Kang1,Kang2,BS5,liem,liem2} for the application of the direct sampling method for other inverse shape problems from scattering theory as well as \cite{BS2} for an application to diffuse optical tomography. In order to analyze the corresponding imaging function for the new Landweber direct sampling method, we will need a factorization of the far-field operator. Then by a similar analysis as is done in \cite{harris-dsm,Liu} we use the factorization and the Funk--Hecke integral identity to prove that the new imaging function will accurately recover the scatterer. 
This project was motivated by the works of \cite{harris-dsm} where a similar imaging function was introduced and analyzed for the behavior of the far-field operator associated with a non-absorbing scatterer. One of the main contributions is to connect the well known factorization method \cite{IST,FM-Book,MUSIC-kirsch} and the direct sampling method via the Landweber iteration.  In addition, we provide a direction on how to choose the regularization parameter as well as a stability result for the new imaging function. In this paper, we will analyze the imaging function corresponding to a polynomial approximate of the Landweber iteration solution operator associated with factorization method. This expands the ideas in \cite{harris-dsm} to also be valid when the scatterer has complex--valued coefficients. 

The rest of the paper is organized as follows. In the next section, we state the direct and inverse problem under consideration. We discuss the scattering by  an isotropic scatterers with a conductive boundary condition and set up the assumption for the scatterer. We then, study and derive a Lippmann-Schwinger integral equation for the scattered field. Then, we consider the factorization of the far-field operator and present some of the properties that the factors of it give us.  As a consequence, we then derive a Landweber iteration method that will establish the resolution analysis for the imaging function. Lastly, we present numerical examples based on the new sampling method based on the Landweber iteration. This shows that this method is analytically rigorous and computationally simple.

\section{Statement of the problem}\label{direct-prob-LDSM}
In this section, we formulate the direct scattering problem in $\mathbb{R}^d$ for extended isotropic scatterers with a conductive boundary condition where $d=2,3$. We assume that the scattering obstacle may be composed of multiple simply connected regions. For our model, we take an incident plane wave denoted $u^i$ to illuminate the scatterer. To this end, we let $u^i(x, \hat{y})=\text{e}^{\text{i}kx\cdot\hat{y}}$ where the incident direction $\hat{y}\in \mathbb{S}^{d-1}$(i.e. unit circle/sphere) and the point $x\in \mathbb{R}^d$. Notice, that the incident field $u^i$ satisfies 
$$\Delta u^i+k^2u^i=0 \quad \text{ in } \quad \mathbb{R}^d.$$ 
The interaction of the incident field and the scatterer denoted by $D$ produces the radiating scattered field $u^s(x,\hat{y})\in H^1_{loc}(\mathbb{R}^d)$ that satisfies 
\begin{align}
\Delta u^s+k^2 n(x)u^s=k^2 \big(1-n(x)\big) u^i \quad & \text{in} \hspace{.2cm}  \mathbb{R}^d \backslash \partial D \label{direct1}\\
u^s_{-} - u_{+}^s=0  \quad \text{ and } \quad \partial_\nu u^s_{-} -  \partial_\nu u^s_{+} = \eta(x) \big(u^s+u^i\big)  \quad & \text{on} \hspace{.2cm} \partial D. \label{direct2}
\end{align}
Along with the Sommerfeld radiation condition 
\begin{align}
{\partial_r u^s} - \text{i} ku^s =\mathcal{O} \left( \frac{1}{ r^{(d+1)/2} }\right) \quad \text{ as } \quad r \rightarrow \infty \label{direct3}
\end{align}
where $\partial_\nu \phi:= \nu \cdot \nabla \phi$ for any $\phi$ and  $r:=|x|$. The radiation condition \eqref{direct3} is satisfied uniformly in all directions $\hat{x}:=x/|x|$.  Here $-$ and $+$ corresponds to taking the trace from the interior or exterior of $D$, respectively. Note, that $u^i$ and its normal derivative are continuous across the boundary of $\partial D$. 

We assume here that the scatterer $D \subset \mathbb{R}^d$ has a boundary that is $C^2$ where $\nu$ denotes the unit outward normal vector to $\partial D$. For the material parameters, we assume that the refractive index satisfies $n \in L^{\infty}(D)$  such that supp$(n-1)=D$ where
$$Im(n(x))\geq0 \quad \text{ for a.e. } \quad x \in  D$$
and that the conductivity $\eta \in L^{\infty}(\partial D)$ satisfies that  
$$Im(\eta(x))\geq 0 \quad \text{ for a.e. } \quad x \in \partial D.$$
Here, the wave number $k>0$ is fixed and under the above assumptions we have that \eqref{direct1}--\eqref{direct3} is well-posed by \cite{fmconductbc}. 
Recall, the fundamental solution to the Helmholtz equation in $\R^d$ given by 
\begin{equation}\label{funsolu}
\Phi(x,y)=
    \begin{cases}
        \frac{\text{i}}{4} H_0^{(1)}(k|x-y|) & d=2\\
         \\ 
        \frac{\text{e}^{\text{i}k|x-y|}}{4\pi|x-y|} & d=3
    \end{cases}
\end{equation}
where $H^{(1)}_0$ denotes the first kind Hankel function of order zero. In our analysis, we will use the following asymptotic formula 
 $$\Phi(x,y)=\gamma \frac{\text{e}^{\text{i}k|x|}}{|x|^{(d-1)/2}} \left\{ \text{e}^{-\text{i}\hat{x}\cdot y} +\mathcal{O}\left( \frac{1}{|x|} \right) \right\} $$ 
 as $|x| \longrightarrow \infty$ uniformly with respect to $\hat{x}$. Here, the parameter  
$$ \gamma=\frac{\text{e}^{\text{i}\pi/4}}{\sqrt{8\pi k}}\hspace{.3cm}\text{in}\hspace{.3cm}\mathbb{R}^2\hspace{.3cm}\text{and}\hspace{.3cm}\gamma=\frac{1}{4\pi}\hspace{.3cm}\text{in}\hspace{.3cm}\mathbb{R}^3.$$
Due to the fact that $u^s$ is a radiating solution to the Helmholtz equation on the exterior of $D$, similarly we have that (see for e.g. \cite{approach,IST})
$$u^s(x,\hat{y})=\gamma \frac{\text{e}^{\text{i}k|x|}}{|x|^{(d-1)/2}}\left\{u^{\infty}(\hat{x},\hat{y})+\mathcal{O}\left(\frac{1}{|x|}\right) \right\}\hspace{.3cm}\text{as}\hspace{.3cm}|x|\longrightarrow \infty.$$
Again, we have that the asymptotic formula holds uniformly with respect to $\hat{x}$. The function $u^{\infty}(\hat{x},\hat{y})$ denotes the far-field pattern of the scattered field for observation direction $\hat{x}$ and incident direction $\hat{y}$ on $\mathbb{S}^{d-1}$. We can now define the far-field operator denoted $F$ given by 
\begin{equation}\label{ff-operator}
(Fg)(\hat{x})=\int_{\mathbb{S}^{d-1}}u^{\infty}(\hat{x},\hat{y})g(\hat{y})\, \text{d}s(\hat{y}) \quad \text{ for } \quad g\in L^2(\mathbb{S}^{d-1})
\end{equation}
mapping $L^2(\mathbb{S}^{d-1})$ into itself.

We are interested in using the known and measured far-field pattern to recover the unknown absorbing scatterer $D$.  Now, we note that the fundamental solution satisfies 
$$\Delta\Phi(x,\cdot)+k^2\Phi(x,\cdot)=-\delta(x-\cdot)\hspace{.3cm}\text{in}\hspace{.3cm}\mathbb{R}^d$$
along with the Sommerfeld radiation condition \eqref{direct3}. Using Green's 2nd Theorem when $x$ is in the interior of $D$ gives that 
\begin{align*}
   u^s(x)\chi_D&=-\int_D \Phi(x,z) [\Delta u^s(z)+k^2 u^s(z)] \, \text{d}z\\
&\hspace{0.5in}+\int_{\partial D} \Phi(x,z)\frac{\partial }{\partial \nu}u^s_-(z)-u^s_-(z)\frac{\partial }{\partial \nu}\Phi(x,z)\, \text{d}s(z)
\end{align*}
where $\chi_D$ is the indicator function on the scatterer $D$. In a similar manner, using Green's 2nd Theorem when $x$ is in the exterior of $D$  gives that
\begin{align*}
    u^s(x)(1-\chi_D)&=-\int_{\partial D} \Phi(x,z)\frac{\partial}{\partial \nu}u^s_+(z)-u^s_+(z)\frac{\partial}{\partial\nu}\Phi(x,z) \, \text{d}s(z)\\
    &\hspace{0.5in}+\int_{\partial B_R} \Phi(x,z)\frac{\partial}{\partial \nu}u^s(z)-u^s(z)\frac{\partial}{\partial \nu}\Phi(x,z)\, \text{d}s(z).
\end{align*}
where $B_R=\{ x \in \R^d \, \, : \, \, |x|<R \}$ such that $D \subset B_R$. Note, that we have used the fact that $u^s$ is a solution to the Helmholtz equation on the exterior of $D$.

Therefore, by adding the above expressions we obtain the Lippmann-Schwinger type representation of the scattered field 
\begin{align}
u^s(x)&= k^2 \int_{D} (n-1)\Phi(x,z)\big[u^s(z)+u^i(z)\big] \, \text{d}z + \int_{\partial D} \eta\Phi(x,z)\big[u^s(z)+u^i(z)\big] \, \text{d}s(z). \label{totalfield}
\end{align}
Notice, that we have used the scattered field and fundamental solution satisfying the radiation condition \eqref{direct3} to handle the boundary integral over $\partial B_R$ by letting $R \to \infty$. In the following section we will define a far-field pattern that involves the parameter at the boundary and factorize it. We will analyze equation \eqref{totalfield} in order to better understand the behavior of the scattered field.

\section{Factorizing the Scattered Field}\label{factor-LDSM}
In this section, we study an extension of the direct sampling method to solve the problem for the reconstruction of absorbing scatterers. The Lippmann--Schwinger representation of the scattered field \eqref{totalfield} will be used in our analysis. We will derive a new factorization of the far-field operator defined in \eqref{ff-operator} which is one of the main components of our analysis. We will prove that the new proposed imaging function has the property that it decays as the sampling point moves away from the scatterer. 

We begin by factorizing the far-field operator defined in \eqref{ff-operator} which will allow us to define an imaging function to facilitate the reconstruction of extended regions $D$. Recall, that the far-field operator for $g\in L^2(\mathbb{S}^{d-1})$ is given by 
$$(Fg)(\hat{x})=\int_{\mathbb{S}^{d-1}}u^\infty(\hat{x},\hat{y})g(\hat{y})ds(\hat{y})$$
where $\mathbb{S}^{d-1}$ is the unit sphere/circle. Since it is well-known that the far-field pattern is analytic (see for e.g. \cite{coltonkress}) it is clear that $F$ is a compact operator. 
It has been shown in \cite{fmconductbc}, that the far-field operator is injective with a dense range provided that
\begin{align}
\Delta \varphi +k^2 \varphi = 0 \quad \text{ and } \quad  \Delta \phi +k^2\phi = 0  \hspace{.2cm}& \text{in} \hspace{.2cm}  D \label{eigproblem1}\\
\varphi=\phi \quad \text{ and } \quad   \partial_{\nu}\varphi=\partial_{\nu}\phi+\eta \phi \label{eigproblem2} \hspace{.2cm}& \text{on} \hspace{.2cm}  \partial D
\end{align}
only admits the trivial solution in $L^2(D) \times L^2(D)$. This says that the wave number $k$ is not a transmission eigenvalue. This problem has been studied \cite{te-cbc,te-cbc2,te-cbc3} and it is known that the set of transmission eigenvalues is at most discrete in the complex plane, provided that $|n-1|^{-1} \in L^\infty (D)$ and $\eta^{-1} \in L^\infty (\partial D)$ (see also \cite{te-2cbc} for a recent study with two conductivity parameters). Therefore, we will make the assumption that \eqref{eigproblem1}--\eqref{eigproblem2} only admits the trivial solution. 
The factorization of the far-field operator was initially studied \cite{Liu} for the case when $\eta=0$. Now, we recall the Lippmann--Schwinger representation of the scattered field 
$$u^s(x,\hat{y})=\int_{D}k^2(n-1)(u^s+u^i)\Phi(x,z)dz+\int_{\partial D}\eta(u^i+u^s)\Phi(x,z)ds(z)$$
which implies that 
$$u^\infty(\hat{x},\hat{y})=\int_{D}k^2(n-1)(u^s+u^i) \text{e}^{-\text{i}k\hat{x}\cdot z}\, \text{d}z+\int_{\partial D}\eta(u^i+u^s)\text{e}^{-\text{i}k\hat{x}\cdot z}\, \text{d}s(z).$$
Using the above formula for the far-field pattern, we can change the order of integration to obtain the following identity 
$$Fg=\int_{D}k^2(n-1)(u_g^s+v_g) \text{e}^{-\text{i}k\hat{x}\cdot z} \, \text{d}z+\int_{\partial D}\eta(u_g^s+v_g) \text{e}^{-\text{i}k\hat{x}\cdot z}\, \text{d}s(z).$$
Here, we let $v_g(x)$ denote the Herglotz wave function defined as  
$$v_g (x) =\int_{\mathbb{S}^{d-1}} \text{e}^{\text{i}k{x}\cdot \hat{y}}g(\hat{y}) \,  \text{d}s(\hat{y}) \quad \text{ and } \quad u_g^{s} (x)=\int_{\mathbb{S}^{d-1}} u^s(x,\hat{y})g(\hat{y}) \, \text{d}s(\hat{y})$$
where $u_g^s$ solves the boundary value problem \eqref{direct1}--\eqref{direct3} when the incident field $u^i=v_g$.

The factorization method for the far-field operator $F$ is based on factorizing $F$ into three distinct pieces that act together and give us more information about the region of interest $D$. To this end, one can show that  
\begin{equation} \label{Hdef}
H:L^2(\mathbb{S}^{d-1})\longrightarrow L^2(D)\times L^2(\partial D) \quad \text{where} \quad Hg=(v_g|_{D} \, ,\, v_g|_{\partial D})
\end{equation}
is a bounded linear operator. Now, we consider the following auxiliary problem  
\begin{align}
(\Delta +k^2 n)w=-k^2(n-1)f \hspace{.2cm}& \text{in} \hspace{.2cm}  \mathbb{R}^d\setminus \partial D\label{aux1}\\
w_{-} - w_{+}=0 \quad  \text{and } \quad \partial_{\nu}w_{-} - \partial_\nu w_{+}= \eta(w+h) \quad &\text{on } \quad \partial D \label{aux2}\\
 {\partial_r w} - \text{i} k w =\mathcal{O} \left( \frac{1}{ r^{(d+1)/2} }\right) \quad& \text{ as } \quad r \rightarrow \infty \label{aux3}
\end{align}
with $f\in L^2(D)$ and $h\in L^2(\partial D)$. It is clear that the auxiliary problem \eqref{aux1}--\eqref{aux3} is well-posed by \cite{fmconductbc} with $w \in H^1_{loc}(\R^d)$ under the assumptions of this paper. We can define the operator $T$ associated with the auxiliary problem \eqref{aux1}--\eqref{aux3} such that  
$$T:L^2(D)\times L^2(\partial D)\longrightarrow L^2(D)\times L^2(\partial D)$$ 
which is given by   
\begin{equation}
    T(f,h)=\left(k^2(n-1)(w+f)|_{D} \, , \, \eta(w+h)|_{\partial D}\right). \label{defT}
\end{equation}
Similarly, we have that  $T$ is a bounded and linear operator. Due to the fact that $u_g^s$ solves \eqref{aux1}--\eqref{aux3} with $f=v_g |_{D}$ and $h = v_g |_{\partial D}$, we have that 
$$THg = \left(k^2(n-1)(u^s_g+v_g)|_{D} \, , \, \eta(u^s_g+v_g)|_{\partial D}\right)$$
for any $g \in L^2(\mathbb{S}^{d-1})$.

Now, in order to determine a suitable factorization of the far-field operator $F$, we need to compute the adjoint of the operator $H$. Observe that by definition we have 
\begin{align*}
    \left(Hg,(\varphi_1,\varphi_2)\right)_{L^2(D)\times L^2(\partial D)} & \\
    &\hspace{-1.25in}=\int_D\left(\int_{\mathbb{S}^{d-1}} \text{e}^{\text{i}k\hat{x}\cdot z}g(\hat{x}) \, \text{d}s(\hat{x})\right)\overline{\varphi_1}(z) \, \text{d}z+\int_{\partial D}\left(\int_{\mathbb{S}^{d-1}}\text{e}^{\text{i}k\hat{x}\cdot z}g(\hat{x})\, \text{d}s(\hat{x})\right)\overline{\varphi_2}(z) \, \text{d}s(z)\\
    &\hspace{-1.25in}=\int_{\mathbb{S}^{d-1}}g(\hat{x})\left(\int_D  \text{e}^{\text{i}k\hat{x}\cdot z}\overline{\varphi_1}(z) \, \text{d}z\right)\, \text{d}s(\hat{x})+\int_{\mathbb{S}^{d-1}}g(\hat{x})\left(\int_{\partial D} \text{e}^{\text{i}k\hat{x}\cdot z}\overline{\varphi_2}(z) \, \text{d}z\right)\, \text{d}s(\hat{x})\\
    &\hspace{-1.25in}=\int_{\mathbb{S}^{d-1}}g(\hat{x})\overline{\left(\int_D  \text{e}^{-\text{i}k\hat{x}\cdot z}{\varphi_1}(z) \, \text{d}z\right)}\, \text{d}s(\hat{x})+\int_{\mathbb{S}^{d-1}}g(\hat{x})\overline{\left(\int_{\partial D} \text{e}^{-\text{i}k\hat{x}\cdot z}{\varphi_2}(z) \, \text{d}z\right)}\, \text{d}s(\hat{x})
\end{align*}
and thus 
$$H^*(\varphi_1,\varphi_2)=\int_D\varphi_1 (z) \text{e}^{-\text{i}k\hat{x}\cdot z}\, \text{d}z+\int_{\partial D}\varphi_2 (z)  \text{e}^{-\text{i}k\hat{x}\cdot z}\, \text{d}s(z).$$
We then obtain
$$H^*THg=\int_{D}k^2(n-1)(u_g^s+v_g) \text{e}^{-\text{i}k\hat{x}\cdot z} \, \text{d}z+\int_{\partial D}\eta(u_g^s+v_g) \text{e}^{-\text{i}k\hat{x}\cdot z}\, \text{d}s(z)=Fg$$
for any $g \in L^2(\mathbb{S}^{d-1})$. Therefore, we have derived a factorization for the far-field operator. 

\begin{theorem}
The far-field operator $F:L^2(\mathbb{S}^{d-1})\longrightarrow L^2(\mathbb{S}^{d-1}) $ has the symmetric factorization $F=H^*TH$ where the operators $H$ and $T$ are defined in \eqref{Hdef} and \eqref{defT}, respectively. 
\label{Ffact}
\end{theorem}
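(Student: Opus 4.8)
The plan is to establish the operator identity $F = H^*TH$ by verifying that $H^*THg = Fg$ holds for every $g \in L^2(\mathbb{S}^{d-1})$; since all three operators are bounded and linear, pointwise equality on $L^2(\mathbb{S}^{d-1})$ is equality as operators. The strategy has three mechanical stages: compute the adjoint $H^*$, identify the image $THg$ using the physics of the auxiliary problem, and compose the two while matching against the Lippmann--Schwinger representation of the far-field pattern.

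First I would compute the adjoint $H^*\colon L^2(D)\times L^2(\partial D)\to L^2(\mathbb{S}^{d-1})$. Starting from the defining pairing $\left(Hg,(\varphi_1,\varphi_2)\right)_{L^2(D)\times L^2(\partial D)}$, I substitute the Herglotz kernel $\text{e}^{\text{i}k\hat{x}\cdot z}$ from \eqref{Hdef} and apply Fubini's theorem to interchange the $z$-integral over $D$ (respectively over $\partial D$) with the $\hat{x}$-integral over $\mathbb{S}^{d-1}$. The crucial bookkeeping step is handling the complex conjugation built into the $L^2$ inner product: moving the conjugate onto the whole inner bracket flips the sign in the exponent, turning the Herglotz kernel $\text{e}^{\text{i}k\hat{x}\cdot z}$ into the far-field kernel $\text{e}^{-\text{i}k\hat{x}\cdot z}$. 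Reading off the resulting expression yields the formula for $H^*(\varphi_1,\varphi_2)$ as a sum of a volume integral over $D$ and a boundary integral over $\partial D$.

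Second I would identify $THg$. Because $u_g^s$ is by construction the scattered field produced by the Herglotz incident wave $v_g$, it solves the auxiliary problem \eqref{aux1}--\eqref{aux3} with data $f=v_g|_{D}$ and $h=v_g|_{\partial D}$, that is, with $(f,h)=Hg$. Substituting $w=u_g^s$ into the definition \eqref{defT} of $T$ immediately gives $THg=\left(k^2(n-1)(u_g^s+v_g)|_{D},\,\eta(u_g^s+v_g)|_{\partial D}\right)$. Applying $H^*$ to this pair and using the adjoint formula from the first stage produces exactly the volume-plus-boundary integral carrying the far-field kernel $\text{e}^{-\text{i}k\hat{x}\cdot z}$. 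Comparing with the Lippmann--Schwinger form of $u^\infty(\hat{x},\hat{y})$ obtained from \eqref{totalfield} via the asymptotics of $\Phi$, then integrating against $g(\hat{y})$ over $\mathbb{S}^{d-1}$ and interchanging the order of integration once more, shows that $H^*THg=Fg$.

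This argument is essentially a verification rather than a deep result, so there is no single hard obstacle; the points requiring care are the justification of the Fubini interchanges, which follows from $n-1,\eta\in L^\infty$ together with the $C^2$ regularity of $\partial D$, and the sign bookkeeping in the adjoint computation. The genuinely important conceptual observation — though not really a difficulty once noticed — is that the auxiliary problem \eqref{aux1}--\eqref{aux3} is engineered precisely so that $u_g^s$ is its solution; this is what allows the middle factor $T$ to encode the scattering physics and reduces the abstract factorization to the Lippmann--Schwinger identity already derived.
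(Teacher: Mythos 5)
Your proposal is correct and follows the same route as the paper: compute $H^*$ by moving the conjugate through the Herglotz kernel, observe that $u_g^s$ solves the auxiliary problem \eqref{aux1}--\eqref{aux3} with data $Hg$ so that $THg=\left(k^2(n-1)(u_g^s+v_g)|_{D},\,\eta(u_g^s+v_g)|_{\partial D}\right)$, and match $H^*THg$ against the Lippmann--Schwinger form of $Fg$. The only cosmetic difference is that you explicitly flag the Fubini justifications, which the paper performs silently.
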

The factorization given above is one of the main pieces that will be used to derive an imaging functional. The next step in our analysis is the Funk--Hecke integral identity, this integral identity gives us the opportunity to evaluate the Herglotz wave function for $g = \phi_z$ which is given by 
\begin{equation}
v_{\phi_z} (x)= \int_{\mathbb{S}^{d-1}} \text{e}^{-\text{i}k(z-x)\cdot \hat{y}}ds(\hat{y})=
    \begin{cases}
        2\pi J_0(k|x-z|), & \text{in } \mathbb{R}^2,\\
        \\
        4\pi j_0(k|x-z|), & \text{in } \mathbb{R}^3.
    \end{cases} \label{Herglotz}
\end{equation}
Here, $J_0$ is the zeroth order Bessel function of the first kind and $j_0$ is the zeroth order spherical Bessel function of the first kind. With the factorization of the far-field operator $F$ and the Funk--Hecke integral identity, we can solve the inverse problem of recovering $D$ by using the decay of the Bessel functions (similarly done in \cite{nf-harris,harris-liem}).   
 
The final piece needed in our study for the factorization of $F$ is to analyze the middle operator $T$. Just as in the factorization \cite{fmconductbc,FM-Book,MUSIC-kirsch} and generalized linear sampling methods \cite{GLSM,phong1,GLSM2}, coercivity of the middle operator is essential to our analysis and will be proven to gather information about the far-field operator $F$. To this end, we will show that $T$ is coercive with respect to the $\overline{Range(H)}$. 
Thus, we begin by showing that $T$ can be decomposed into a sum of a compact and coercive operator.

\begin{theorem}\label{Tdecom}
Let the operator $T$ be as defined in \eqref{defT}. Then we have that $T=S+K$ where the operators $S$ and $K:L^2(D)\times L^2(\partial D)\longrightarrow L^2(D)\times L^2(\partial D)$ are given by 
$$S(f,h)=(k^2(n-1)f,\eta h) \quad \text{ and } \quad K(f,h)=(k^2(n-1)w,\eta w)$$
where $w$ is the unique solution to \eqref{aux1}--\eqref{aux3}.  Moreover, we have that $K$ is a compact operator and 
$$ \pm Re\left(S(f,h),(f,h)\right)_{L^2(D)\times L^2(\partial D)}  \geq \alpha \| (f,h) \|^2_{L^2(D)\times L^2(\partial D)}  \quad \text{for some positive $\alpha>0$}$$
provided that $\pm Re(n-1)$ and $\pm Re(\eta)$ are positive definite.
\end{theorem}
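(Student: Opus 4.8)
The decomposition itself is immediate: by the definition of $T$ in \eqref{defT} we may write $T(f,h)=\left(k^2(n-1)(w+f)|_D\, ,\, \eta(w+h)|_{\partial D}\right)=\left(k^2(n-1)f\, ,\, \eta h\right)+\left(k^2(n-1)w\, ,\, \eta w\right)$, which is exactly $S(f,h)+K(f,h)$. Thus only two claims remain: the coercivity-type sign estimate for $S$ and the compactness of $K$. For $S$ I would simply expand the inner product. Using the conjugate-linear convention of the adjoint computation above,
\begin{align*}
\left(S(f,h),(f,h)\right)_{L^2(D)\times L^2(\partial D)}=\int_D k^2(n-1)|f|^2\,\text{d}z+\int_{\partial D}\eta|h|^2\,\text{d}s,
\end{align*}
so that taking the real part isolates $Re(n-1)$ and $Re(\eta)$:
\begin{align*}
Re\left(S(f,h),(f,h)\right)=\int_D k^2\, Re(n-1)\,|f|^2\,\text{d}z+\int_{\partial D}Re(\eta)\,|h|^2\,\text{d}s.
\end{align*}
Under the assumption that $Re(n-1)\geq c_1>0$ a.e.\ in $D$ and $Re(\eta)\geq c_2>0$ a.e.\ on $\partial D$, this is bounded below by $\alpha\big(\|f\|_{L^2(D)}^2+\|h\|_{L^2(\partial D)}^2\big)$ with $\alpha=\min\{k^2 c_1,c_2\}$, giving the $+$ case; the $-$ case is identical once $-Re(n-1)$ and $-Re(\eta)$ are positive definite. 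This step is routine.

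The substance of the theorem is the compactness of $K$, and the idea is that $K$ gains regularity through the solution operator of the auxiliary problem and then loses it through a compact Sobolev embedding. Concretely, by the well-posedness of \eqref{aux1}--\eqref{aux3} the solution map $(f,h)\mapsto w$ is bounded from $L^2(D)\times L^2(\partial D)$ into $H^1_{loc}(\R^d)$; restricting to a ball $B_R\supset\ov{D}$ yields the a priori bound $\|w\|_{H^1(B_R)}\lesssim \|f\|_{L^2(D)}+\|h\|_{L^2(\partial D)}$. Consequently $(f,h)\mapsto w|_D$ is bounded into $H^1(D)$, and since $w$ is continuous across $\partial D$ by \eqref{aux2}, the trace $(f,h)\mapsto w|_{\partial D}$ is bounded into $H^{1/2}(\partial D)$. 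I would then invoke the compact embeddings $H^1(D)\hookrightarrow L^2(D)$ (Rellich--Kondrachov) and $H^{1/2}(\partial D)\hookrightarrow L^2(\partial D)$, together with the boundedness of multiplication by the $L^\infty$ coefficients $k^2(n-1)$ and $\eta$. Writing $K$ as the composition of the bounded solution/trace map into $H^1(D)\times H^{1/2}(\partial D)$, the compact embedding into $L^2(D)\times L^2(\partial D)$, and the bounded multiplication operator shows that $K$ is compact.

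The main obstacle is the compactness argument, and within it the regularity bookkeeping for the boundary component: one must verify that the solution map genuinely delivers $w|_{\partial D}$ in $H^{1/2}(\partial D)$ (using the continuity of $w$ across $\partial D$, so that the trace is unambiguous despite the jump in the normal derivative) and that the associated embedding into $L^2(\partial D)$ is compact. By contrast, the coercivity of $S$ is a one-line computation once the sign assumptions on $Re(n-1)$ and $Re(\eta)$ are imposed.
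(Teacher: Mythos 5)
Your proposal is correct and follows essentially the same route as the paper: the coercivity of $S$ is obtained by expanding the inner product and using the sign assumptions on $Re(n-1)$ and $Re(\eta)$, and the compactness of $K$ rests on the compact embeddings $H^1(D)\hookrightarrow L^2(D)$ and $H^{1/2}(\partial D)\hookrightarrow L^2(\partial D)$, which is precisely what the paper invokes. You merely spell out the intermediate mapping properties (well-posedness of \eqref{aux1}--\eqref{aux3}, the trace bound, and boundedness of multiplication by the $L^\infty$ coefficients) that the paper's one-sentence compactness argument leaves implicit.
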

\begin{proof}
To prove the claim, we first start with proving the coercivity result for the operator $S$. Therefore, by definition of the operator we have that 
$$\left( S(f,h),(f,h) \right)_{L^2(D)\times L^2(\partial D)} = k^2 \left((n-1)f,f \right)_{L^2(D)} + (\eta h, h )_{L^2(\partial D)}$$
and by the assumptions on the coefficients we can obtain the estimate
$$\pm Re\left( S(f,h),(f,h) \right)_{L^2(D)\times L^2(\partial D)}  \geq \alpha ||(f,h)||^2_{L^2(D)\times L^2(\partial D)}$$
for some constant $\alpha>0$ depending on the coefficients. 
 
 Now, the compactness of the operator $K$ is due to the fact that $H^{1/2}(\partial D)$ is compactly embedded into $L^2(\partial D)$ as well as $H^1(D)$ being compactly embedded into $L^2(D)$.  This proves the claim. 
\end{proof}

Now, we proceed with stating a well-known limit (see for e.g. \cite{FM-Book}) that will help us analyze the behavior of the imaginary part of the operator $T$. Studying the imaginary part  of $T$ will help prove our coercivity result for the operator. Let $w$ be the solution function of the auxiliary problem above, i.e. \eqref{aux1}--\eqref{aux3}. Then, we have that
\begin{equation}
\int_{\partial B_R}\overline{w}\partial_{\nu}w \, \text{d}s\longrightarrow \text{i} |\gamma|^2 k \int_{\mathbb{S}^{d-1}}|w^{\infty}|^2 \, \text{d}s \quad \text{ as } \quad R\longrightarrow \infty. \label{partialw}
\end{equation}
 We can now prove that the imaginary part of the operator $T$ is positive on the $\overline{Range(H)}$. Recall, that we have assumed that the material parameters satisfy the estimates $Im(n)\geq 0$ and $Im(\eta) \geq 0$. 

\begin{theorem} \label{imT}
Let the operator $T$ be as defined in \eqref{defT}. Then  we have that 
$$Im\left(T(f,h),(f,h)\right)_{{L^2(D)}\times{L^2(\partial D)}} > 0$$ 
for all $(0,0) \neq (f,h) \in \overline{Range(H)}$ provided that $k$ is not a transmission eigenvalue. 
\end{theorem}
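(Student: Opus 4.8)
The plan is to evaluate the sesquilinear form $\left(T(f,h),(f,h)\right)$ on the range of $H$, collapse it to a single radiating boundary term at infinity via Green's identities, and then read off its imaginary part through the limit \eqref{partialw}. Throughout I first take $(f,h)\in Range(H)$, so that $f=v|_D$ and $h=v|_{\partial D}$ for a Herglotz wave function $v$ solving $\Delta v+k^2v=0$; the general case $(f,h)\in\overline{Range(H)}$ is recovered at the end by continuity together with the density of Herglotz functions among the $H^1(D)$ solutions of the Helmholtz equation in $D$. For such data the solution $w$ of \eqref{aux1}--\eqref{aux3} is precisely the scattered field $u^s_g$, radiating in the exterior with far field $w^\infty$, and I set $W:=w+v$. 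Using $\Delta v+k^2v=0$ one checks that $W$ satisfies $\Delta W+k^2nW=0$ in $D$, while the conductive jump relation in \eqref{aux2} reads $\partial_\nu w_--\partial_\nu w_+=\eta\,W|_{\partial D}$.

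Writing out the definition \eqref{defT}, I would first record
\begin{equation*}
\left(T(f,h),(f,h)\right)=k^2\int_D (n-1)\,W\,\overline{v}\,\text{d}x+\int_{\partial D}\eta\,W\,\overline{v}\,\text{d}s.
\end{equation*}
Splitting $\overline{v}=\overline{W}-\overline{w}$ produces the modulus terms $k^2\int_D(n-1)|W|^2+\int_{\partial D}\eta|W|^2$ together with a remainder in which I substitute $k^2(n-1)W=-(\Delta+k^2)W$ in $D$ and the jump relation on $\partial D$. Applying Green's second identity separately on $D$ and on $B_R\setminus\overline{D}$, the interior normal traces $\partial_\nu w_\mp$ combine with the jump term, leaving only the volume integrals of $|\grad w|^2$ and $k^2|w|^2$ (which are real) plus the radiating term $\int_{\partial B_R}\overline{w}\,\partial_\nu w\,\text{d}s$. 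Since every discarded contribution is real, taking imaginary parts and letting $R\to\infty$ with \eqref{partialw} yields the key identity
\begin{equation*}
Im\left(T(f,h),(f,h)\right)=k^2\int_D Im(n)\,|W|^2\,\text{d}x+\int_{\partial D}Im(\eta)\,|W|^2\,\text{d}s+|\gamma|^2k\int_{\mathbb{S}^{d-1}}|w^\infty|^2\,\text{d}s.
\end{equation*}
By the standing hypotheses $Im(n)\ge0$ and $Im(\eta)\ge0$, all three terms are nonnegative, so the form is nonnegative.

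For strict positivity, suppose the right-hand side vanishes for some $(f,h)\neq(0,0)$. Then in particular $w^\infty\equiv0$ on $\mathbb{S}^{d-1}$, so Rellich's lemma with unique continuation forces $w\equiv0$ in the connected exterior $\R^d\setminus\overline{D}$. Reading the transmission conditions in \eqref{aux2} from the outside then gives $w_-=0$ and $\partial_\nu w_-=\eta\,v|_{\partial D}$ on $\partial D$, whence the pair $(W,v)$ satisfies $\Delta W+k^2nW=0$ and $\Delta v+k^2v=0$ in $D$ with $W=v$ and $\partial_\nu W=\partial_\nu v+\eta v$ on $\partial D$; that is, $(W,v)$ solves the homogeneous interior transmission eigenvalue problem \eqref{eigproblem1}--\eqref{eigproblem2}. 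Since $k$ is assumed not to be a transmission eigenvalue, $W=v=0$ in $D$, and therefore $(f,h)=(v|_D,v|_{\partial D})=(0,0)$, a contradiction. Finally, because $T$ is bounded and the right-hand side of the identity depends continuously on $(f,h)$ through $W$ and $w^\infty$, both the identity and the strict inequality pass to the closure $\overline{Range(H)}$, every element of which is again the pair of Dirichlet and volume traces of an $H^1(D)$ Helmholtz solution.

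The main obstacle I anticipate is the bookkeeping in the Green's-identity step: one must track the interior and exterior normal traces $\partial_\nu w_\mp$ across $\partial D$, verify that exactly the right combination cancels against the conductive jump $\eta W$, and confirm that all non-radiating contributions are real so that they drop out of $Im(\cdot)$, leaving the far-field term in \eqref{partialw} as the sole source of the imaginary part. A secondary technical point is the passage to $\overline{Range(H)}$, which hinges on identifying each limiting pair as the trace of an interior Helmholtz solution so that the transmission-eigenvalue argument remains applicable.
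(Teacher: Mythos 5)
Your proposal is correct and follows essentially the same route as the paper: the same splitting $\overline{v}=\overline{w+v}-\overline{w}$, Green's identities on $D$ and $B_R\setminus\overline{D}$ combined with the conductive jump and the limit \eqref{partialw} to isolate the imaginary part, and then Rellich's lemma plus the transmission--eigenvalue assumption for strict positivity. The only cosmetic differences are that the paper derives the identity \eqref{imTequ} for general $(f,h)\in L^2(D)\times L^2(\partial D)$ rather than first restricting to $Range(H)$ and passing to the closure by density (both treatments rely on the same identification of elements of $\overline{Range(H)}$ with traces of interior Helmholtz solutions), and that the integration-by-parts step you label Green's second identity is in fact Green's first identity.
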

\begin{proof}
In order to prove the claim, we will express the inner-product using the auxiliary boundary value problem \eqref{aux1}--\eqref{aux3} for $w$ with inputs $f$ and $h$. We begin, by using the fact that 
$$\overline{f}=\overline{f+w}-\overline{w} \quad \text{ as well as } \quad \overline{h}=\overline{h+w}-\overline{h}$$ 
and observe that 
\begin{align*}
    \left(T(f,h),(f,h)\right)_{{L^2(D)}\times{L^2(\partial D)}}&=\int_D k^2(n-1)|w+f|^2dx+\int_{\partial D}\eta |h+w|^2 \, \text{d}s\\
    &-\int_Dk^2(n-1)(w+f)\overline{w}\, \text{d}x-\int_{\partial D}\eta (w+h)\overline{w}\, \text{d}s.
\end{align*}
Recall, our auxiliary problem \eqref{aux1}--\eqref{aux3} and since $n=1$ on the exterior of $D$, we have 
\begin{equation}
\Delta w+k^2w=-k^2(n-1)(w+f) \hspace{.5cm}\text{in}\hspace{.5cm} B_R\setminus \partial D \hspace{.5cm}\text{for}\hspace{.5cm} D\subset B_R. \label{n=1}
\end{equation}
Then, we apply Green's 1st Theorem on \eqref{n=1} in $D$ to obtain  
\begin{align*} 
- \int_D k^2(n-1)(w+f)\overline{w} \, \text{d}x &= \int_D  \overline{w}(\Delta w+k^2w)\, \text{d}x\\
						    &=-\int_D |\grad w|^2 -k^2 |w|^2 \, \text{d}x + \int_{\partial D} \overline{w} \partial w_{-} \, \text{d}s 
\end{align*} 
and in $B_R \setminus \partial D$ we have that 
\begin{align*} 
0 = -\int_{B_R \setminus \overline{D}}  |\grad w|^2 -k^2 |w|^2 \, \text{d}x + \int_{\partial B_R} \overline{w} \partial_\nu w \, \text{d}s - \int_{\partial D} \overline{w} \partial w_{+} \, \text{d}s.
\end{align*} 
Now, by  the appealing to the jump in the normal derivative
$$\partial_{\nu}w_{-} - \partial_\nu w_{+}= \eta(w+h) \quad \text{on } \quad \partial D$$
we have the equality 
\begin{align*}
    \left(T(f,h),(f,h)\right)_{{L^2(D)}\times{L^2(\partial D)}}&=\int_Dk^2(n-1)|w+f|^2 \, \text{d}x+\int_{\partial D}\eta |h+w|^2 \, \text{d}s\\
    & -\int_{B_R}|\nabla w|^2-k^2|w|^2 \, \text{d}x+\int_{\partial B_R}\overline{w}\partial_{\nu}w \, \text{d}s.
\end{align*}
Letting $R\longrightarrow \infty$ and using \eqref{partialw} we see that 
\begin{align}
   Im\left(T(f,h),(f,h)\right)_{{L^2(D)}\times{L^2(\partial D)}}&=k^2\int_D Im(n)|w+f|^2\, \text{d}x \nonumber \\
    &+\int_{\partial D}Im(\eta)|h+w|^2 \, \text{d}s+|\gamma|^2 k \int_{\mathbb{S}^{d-1}}|w^{\infty}|^2 \, \text{d}s. \label{imTequ}
\end{align}
By assumption on the imaginary part of the coefficients, we have that the imaginary part of $T$ is non-negative in $L^2(D) \times L^2(\partial D)$.

Now, we prove that imaginary part of $T$ is positive in $\overline{Range(H)}$. To this end, we assume that there exists $(f,h) \in \overline{Range(H)}$ such that 
$$Im\left(T(f,h),(f,h)\right)_{{L^2(D)}\times{L^2(\partial D)}}=0$$
and we must prove that prove $(f,h) = (0,0)$. 
From the definition of $H$, we have that $f=v|_{D}$ and $h=v|_{\partial D}$ where $v$ is a solution to the Helmholtz equation in $D$. 
Notice, that by \eqref{imTequ} we have that $w^{\infty}=0$ and by Rellich's Lemma (see for e.g. \cite{approach,IST}) we have that $w = 0$ in $\R^d \setminus \overline{D}$. By the boundary conditions in \eqref{aux2}, we have that 
$$w_{-} = 0\quad \text{ and } \quad \partial_{\nu}w_{-} = \eta v \quad \text{on } \quad \partial D$$
since $\partial_{\nu}w_{+} =w_{+} = 0$ on $\partial D$. We also have that 
$$\Delta w+k^2w=-k^2(n-1)(w+v) \quad \text{ and }\quad \Delta v +k^2v=0 \quad \text{in } \quad D.$$
Combining the above inequalities, $(w+v , v) \in L^2(D) \times L^2(D)$ satisfy the boundary value problem 
\begin{align*}
\Delta (w+v)+k^2 n (w+v)=0 \quad \text{ and } \quad \Delta v+k^2v=0 \quad &\text{in } \quad D\\
(w+v) = v \quad \text{ and } \quad \partial_{\nu}(w+v) = \partial_\nu v + \eta v \quad &\text{on } \quad \partial D.
\end{align*}
By our assumption, we have that the above boundary value problem only admits the trivial solution $(w+v , v)=(0,0)$ i.e. $f=0$ and $h=0$, proving the claim. 
\end{proof}

In the next section we will use this factorization to derive a direct sampling method that is connected to the factorization method.

 \section{The Landweber Direct Sampling Method}
 In this section, we study a Landweber indicator function using the operator $F_{\#}$ defined below. In previous works, a similar reconstruction method for extended regions based on \eqref{totalfield} was studied for the case where $\eta=0$ and $n$ real-valued see \cite{harris-dsm}. Although, the authors did not consider the case of absorbing scatterers they got better reconstructions using the Tikhonov direct sampling method. In our problem, the coefficients for the scattering problem \eqref{direct1}--\eqref{direct3} are complex and thus we must use a different characterization for the operator.  We extend the regularization to the Landweber iteration basing it on the factorization method. In comparison to previous studies, the Landweber iteration will provide us the ability to pick a regularization parameter considering a discrepancy principle and the `optimal' number of iterations. 
 
The operator is defined to be $F_{\#}=|\text{Re}(F)|+| \text{Im}(F)|$ where 
$$\text{Re}({F})=\frac{1}{2}({F}+{F}^{*}) \quad \text{and} \quad \text{Im}({F})=\frac{1}{2 \text{i}}({F}-{F}^{*}).$$ 
Note, that the absolute value of the above self-adjoint compact operators is given by its eigenvalue decomposition. 
One can easily show that $F_{\#}$ is a self-adjoint, compact, and positive (see for e.g. \cite{FM-Book}). Therefore, we have that the operator $F_{\#}$ have an orthonormal eigenvalue decomposition $(\lambda_j,\psi_j)\in \mathbb{R}_+\times L^2(\mathbb{S}^{d-1})$ such that 
 $$F_{\#}g=\sum_{j=1}^{\infty}\lambda_j(g,\psi_j)_{L^2(\mathbb{S}^{d-1})}\psi_j\hspace{.5cm}\text{for all}\hspace{.5cm}g\in L^2(\mathbb{S}^{d-1}).$$
 As a consequence of $F_{\#}$ being a compact operator we have that $\lambda_j\longrightarrow 0$ as $j\longrightarrow \infty.$ Thus, we have that $\lambda_j\neq 0$ for all $j$ and the set $\{\psi_j\}$ is a complete orthonormal set in $L^2(\mathbb{S}^{d-1}).$

\subsection{Derivation of the Landweber Regularization}
\label{Landweberderi}
We use the operator $F_{\#}$ to recover absorbing scatteres by solving the ill-posed equation of the form 
\begin{equation}\label{FFE-1CBC}
    F_{\#}^{1/2}g_z=\phi_{z}\hspace{.25cm}\text{for}\hspace{.25cm} z\in \mathbb{R}^d
\end{equation}
which is solvable if and only if the sampling point $z\in D.$ We will derive an approximate solution operator to the above equation and use the Landweber iteration to approximate the solution operator. We  exploit the fact that we can construct a polynomial that when applied to the operator acts as the solution operator for $F_{\#}$. 

The Landweder regularized solution to \eqref{FFE-1CBC} will be denoted $g_z^r$ and using the eigenvalue decomposition we have that 
$$g_z^r=\sum_{j=1}^{\infty}\frac{1}{\sqrt{\lambda_j}}\Big[1-(1-\beta\lambda_j)^r\Big](\phi_z,\psi_j)_{L^2(\mathbb{S}^{d-1})}\psi_j.$$
We define the filter function 
$$\Gamma_r(t)=\frac{1-(1-\beta t)^r}{\sqrt{t}} \quad \text{ where} \quad  \beta\in (0,{1}/{\lambda_1})  \quad \text{ and} \quad  r\in \mathbb{N}$$
 which has a removable discontinuity such that $\Gamma_r(0):=0\, $ and as a consequence is continuous on the interval $[0, \lambda_1]$. The function $\Gamma_r(t)$ is connected to the solution operator for the Landweder regularization given by the mapping
\begin{equation}
\phi_z \longmapsto \sum_{j=1}^{\infty} \Gamma_r(\lambda_j) (\phi_z,\psi_j)_{L^2(\mathbb{S}^{d-1})}\psi_j. \label{mappphiz}
\end{equation}
We note that the parameter $\beta$ is chosen to be in the interval $\beta\in (0,{1}/{\lambda_1})$ and $r\in \mathbb{N}$ that we can control and choose throughout the calculations in our experiments. 

In order to approximate our solution operator \eqref{mappphiz}, we will exploit the fact that the function $\Gamma_r(t)$ is continuous for all $t\geq 0$. To this end, for every $\epsilon>0$ there is a polynomial $P_{r,\epsilon}(t)$ where $P_{r,\epsilon}(0)$=0 that approximates our function $\Gamma_r(t)$ such that 
\begin{equation}
    \left\|P_{r,\epsilon}(t)-\Gamma_r(t)\right\|_{L^{\infty}(0,\lambda_1)}<\epsilon.
    \label{approxpoly}
\end{equation}
The construction of this approximation polynomial gives us an approximation of the solution operator that is defined to be 

\begin{equation}
P_{r,\epsilon}(F_{\#})\phi_z=\sum_{j=1}^{\infty}P_{r,\epsilon}(\lambda_j)(\phi_z,\psi_j)\psi_j.    \label{polyF}
\end{equation}
Using \eqref{mappphiz} and \eqref{approxpoly}, we propose a Landweder indicator function for a fixed $r$ and $\beta$ and this is by exploiting the defined polynomial of the operator $F_{\#}$ via the eigenvalue decomposition as commonly done in linear algebra. Thus we have the following imaging function 
\begin{equation}
W_{\text{LDSM}}(z)=\left\|P_{r,\epsilon}(F_\#)\phi_z\right\|^2_{L^2(\mathbb{S}^{d-1})} \quad  \text{with} \quad \left\|P_{r,\epsilon}(t)-\Gamma_r(t)\right\|_{L^{\infty}(0,\lambda_1)}\approx 0    \label{DSMdef}
\end{equation}
where $P_{r,\epsilon}(t)$ is our approximation polynomial. 

We know that $\{\psi_j\}$ is an orthonormal basis in the space $L^2({\mathbb{S}^{d-1}})$ and as a consequence using the definition of $P_{r,\epsilon}(F_\#)\phi_z$ we have 
$$\left\|P_{r,\epsilon}(F_\#)\phi_z\right\|^2_{L^2({\mathbb{S}^{d-1}})}=\sum_{j=1}^{\infty}P^2_{r,\epsilon}(\lambda_j)\Big|(\phi_z,\psi_j)_{L^2({\mathbb{S}^{d-1}})}\Big|^2.$$
Now that we have the new Landweder indicator function $W_{\text{LDSM}}(z)$ where we will connect it to the factorization operator derived in Section \ref{factor-LDSM}. Observe that \eqref{approxpoly} gives us the following inequality for all $\epsilon>0$ and for fixed parameters $\beta$ and $r$ 
$$P^2_{r,\epsilon}(\lambda_j)\leq \Gamma^2_{r}(\lambda_j)+2\epsilon\Gamma_r(\lambda_j)+\epsilon^2,$$
where this holds for all $\epsilon>0$ and $j\in \mathbb{N}.$ We know that $\Gamma_r(t)$ is continuous for all $t\geq 0$ and by using Bernoulli's inequality for $t \geq -1$ we have $\Gamma_r(t)\leq r\beta \sqrt{t}.$ As a consequence, we define 
$$C_r:=\text{sup}_{(0, \lambda_1)}\left\|\Gamma_r(t)\right\|\leq \text{sup}_{(0, \lambda_1)}r\beta\sqrt{t}=r\beta\sqrt{ \lambda_1 }<\infty$$
where it only depends on $\beta,r,$ and $\lambda_1.$ 
Now, take $0<\epsilon <1$ and we estimate the following
\begin{align*}
    \left\|P_{r,\epsilon}(F_\#)\phi_z\right\|^2_{L^2({\mathbb{S}^{d-1}})}&\leq \sum_{j=1}^{\infty}\Gamma_r^2(\lambda_j)\Big|(\phi_z,\psi_j)_{L^2({\mathbb{S}^{d-1}})}\Big|^2\\
    &+2\epsilon\sum_{j=1}^{\infty}\Gamma_r(\lambda_j)\Big|(\phi_z,\psi_j)_{L^2({\mathbb{S}^{d-1}})}\Big|^2+\epsilon^2\sum_{j=1}^{\infty}\Big|(\phi_z,\psi_j)_{L^2({\mathbb{S}^{d-1}})}\Big|^2\\
    &\leq \sum_{j=1}^{\infty}\Gamma_r^2(\lambda_j)\Big|(\phi_z,\psi_j)_{L^2({\mathbb{S}^{d-1}})}\Big|^2+(2\epsilon C_r+\epsilon^2)||\phi_z||^2_{L^2({\mathbb{S}^{d-1}})}\\
    &=\sum_{j=1}^{\infty}\Gamma_r^2(\lambda_j)\Big|(\phi_z,\psi_j)_{L^2({\mathbb{S}^{d-1}})}\Big|^2+2^{d-1}\pi(2\epsilon C_r+\epsilon^2)
\end{align*}
where $\left\|\phi_z\right\|^2_{L^2({\mathbb{S}^{d-1}})}=2^{d-1}\pi.$ Using Bernoulli's inequality once more we can easily see  $\Gamma_r^2(\lambda_j)\leq r^2\beta^2\lambda_j$ and combining this bound with the above inequalities gives

$$ \left\|P_{r,\epsilon}(F_\#)\phi_z\right\|^2_{L^2({\mathbb{S}^{d-1}})}\leq r^2\beta^2\sum_{j=1}^{\infty}\lambda_j\Big|(\phi_z,\psi_j)_{L^2({\mathbb{S}^{d-1}})}\Big|^2+C(r,d)\epsilon$$
where $C(r,d)$ is a positive constant depending on our regularization parameters and the dimension. The definition of $F_{\#}^{1/2}$ and our above inequalities implies that 
$$ \left\|P_{r,\epsilon}(F_\#)\phi_z\right\|^2_{L^2({\mathbb{S}^{d-1}})}\leq r^2\beta^2\Big|(F_{\#}\phi_z,\phi_z)_{L^2({\mathbb{S}^{d-1}})}\Big|+C(r,d)\epsilon$$
for fixed $\beta$ and $r.$ 

 In the previous section, Theorem \eqref{Ffact} established a factorization of the operator $F$. Now, with our new operator $F_{\#}$, it is known that by Theorem \ref{Tdecom} and \ref{imT} that the operator $F_{\#}=H^*T_{\#}H$ where the new operator $T_{\#}$ is coercive. Having this factorization allows us to do the following

\begin{align*}
    \Big|(F_{\#}\phi_z,\phi_z)_{L^2(\mathbb{S}^{d-1})}\Big|&=\Big|(T_{\#}H\phi_z,H\phi_z)_{L^2(D)\times L^2(\partial D)}\Big|.
\end{align*}
Thus, there exists constants $c_1$ and $c_2$ such that 
\begin{align}\label{imgfuncbounds}
c_1 \left( \| v_{\phi_z}\|^2_{L^2(D)} + \| v_{\phi_z}\|^2_{L^2(\partial D)} \right) &\leq  \left|(F_{\#}\phi_z,\phi_z)_{L^2(\mathbb{S}^{d-1})}\right| \nonumber \\
&\hspace{1in}\leq c_2  \left( \| v_{\phi_z}\|^2_{L^2(D)} + \| v_{\phi_z}\|^2_{L^2(\partial D)} \right).
\end{align}
Thus we have the main result of this section which relates the operator $F_{\#}$ to the Bessel functions that will decay as we move far away from the region of interest.

\begin{theorem}\label{distD}
For all $z\in \mathbb{R}^d\setminus \overline{D}$ we have that 
$$\left\|P_{r,\epsilon}(F_\#)\phi_z\right\|^2_{L^2({\mathbb{S}^{d-1}})}\leq C\text{dist}(z,D)^{1-d}+\mathcal{O}(\epsilon)\hspace{.5cm}\text{for}\hspace{.5cm} \text{dist}(z,D)\to \infty$$
where the $P_{r,\epsilon}(t) =\Gamma_r(t) + \mathcal{O}(\epsilon)$ as $\epsilon \longrightarrow 0$.
\end{theorem}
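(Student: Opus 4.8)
The plan is to start from the estimate established just above the theorem statement, namely
$$\left\|P_{r,\epsilon}(F_\#)\phi_z\right\|^2_{L^2(\mathbb{S}^{d-1})}\leq r^2\beta^2\left|(F_{\#}\phi_z,\phi_z)_{L^2(\mathbb{S}^{d-1})}\right|+C(r,d)\epsilon,$$
and then feed it into the coercive factorization $F_\#=H^*T_\# H$ (valid by Theorems \ref{Tdecom} and \ref{imT}). Applying the upper bound in \eqref{imgfuncbounds} immediately gives
$$\left\|P_{r,\epsilon}(F_\#)\phi_z\right\|^2_{L^2(\mathbb{S}^{d-1})}\leq r^2\beta^2 c_2\left(\|v_{\phi_z}\|^2_{L^2(D)}+\|v_{\phi_z}\|^2_{L^2(\partial D)}\right)+C(r,d)\epsilon.$$
This reduces the theorem to showing that the two Herglotz-norm terms decay like $\text{dist}(z,D)^{1-d}$ as $\text{dist}(z,D)\to\infty$; the $\epsilon$-contribution is already isolated and, since $P_{r,\epsilon}=\Gamma_r+\mathcal{O}(\epsilon)$, will be absorbed into the $\mathcal{O}(\epsilon)$ of the statement.

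Next I would invoke the Funk--Hecke identity \eqref{Herglotz}, which evaluates $v_{\phi_z}$ in closed form: $v_{\phi_z}(x)=2\pi J_0(k|x-z|)$ in $\mathbb{R}^2$ and $v_{\phi_z}(x)=4\pi j_0(k|x-z|)$ in $\mathbb{R}^3$. Because $z\in\mathbb{R}^d\setminus\overline{D}$, one has $|x-z|\geq \text{dist}(z,D)$ for every $x\in D$ and every $x\in\partial D$, so the Bessel argument $k|x-z|$ is uniformly large over $D$ and $\partial D$ once $\text{dist}(z,D)$ is large. The remaining work is then purely a matter of plugging in the large-argument behavior of the Bessel functions and integrating.

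For the final estimate I would use the standard asymptotics: $J_0(t)=\sqrt{2/(\pi t)}\cos(t-\pi/4)+\mathcal{O}(t^{-3/2})$, which yields $|J_0(t)|^2\leq C\,t^{-1}$ for $t$ bounded below, together with the exact identity $j_0(t)=\sin(t)/t$, which yields $|j_0(t)|^2\leq t^{-2}$. Substituting $t=k|x-z|\geq k\,\text{dist}(z,D)$ and integrating over $D$ (finite volume $|D|$) and over $\partial D$ (finite surface measure) gives in both norms the bound $C\,\text{dist}(z,D)^{-1}$ when $d=2$ and $C\,\text{dist}(z,D)^{-2}$ when $d=3$, that is, exactly $\text{dist}(z,D)^{1-d}$ in both cases. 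Combining this with the displayed inequality and renaming constants completes the proof.

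The main obstacle is comparatively mild, since the functional-analytic heavy lifting (the symmetric factorization and the coercivity of $T_\#$, hence the two-sided bound \eqref{imgfuncbounds}) is already in hand. The one point requiring care is the Bessel asymptotics: unlike the $d=3$ case, where $j_0$ has an exact elementary form, in $d=2$ the bound $|J_0(t)|^2\leq C t^{-1}$ must be drawn from the asymptotic expansion and applied uniformly in $x$, which is legitimate precisely because $k|x-z|\geq k\,\text{dist}(z,D)$ is uniformly large. I would also keep track of the fact that the resulting constant $C$ depends on $r$, $\beta$, $c_2$, $k$, and on the measures $|D|$ and $|\partial D|$, but not on $z$, so that the decay is genuinely uniform in the sampling point.
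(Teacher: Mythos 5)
Your proposal is correct and follows essentially the same route as the paper's proof: the bound $\left\|P_{r,\epsilon}(F_\#)\phi_z\right\|^2_{L^2(\mathbb{S}^{d-1})}\leq r^2\beta^2\left|(F_{\#}\phi_z,\phi_z)\right|+\mathcal{O}(\epsilon)$ combined with \eqref{imgfuncbounds}, the Funk--Hecke evaluation \eqref{Herglotz}, and the large-argument decay of $J_0$ and $j_0$. You have in fact supplied more detail than the paper does (the uniform lower bound $k|x-z|\geq k\,\mathrm{dist}(z,D)$ and the integration over $D$ and $\partial D$), which only strengthens the argument.
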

\begin{proof}
The proof of the claim is a result of the fact that 
$$ \left\|P_{r,\epsilon}(F_\#)\phi_z\right\|^2_{L^2({\mathbb{S}^{d-1}})}\leq r^2\beta^2\Big|(F_{\#}\phi_z,\phi_z)_{L^2({\mathbb{S}^{d-1}})}\Big|+\mathcal{O}(\epsilon)$$
as $\epsilon\to 0$ along with equations \eqref{Herglotz} and \eqref{imgfuncbounds}. Then, by using the fact that the Bessel function $J_0 (|z-x|)$ decays at a rate of $|z-x|^{-1/2}$ as $|z-x| \to \infty$ for $d=2$ and $j_0 (|z-x|)$ decays at a rate of $|z-x|^{-1}$ as $|z-x| \to \infty$ for $d=3$.
\end{proof}

This theorem gives the resolution analysis for using the imaging function. This implies that the imaging function will decay fast when we move away from the scatterer. Also, an important question about developing this imaging function is the choice and control over the parameter $r\in \mathbb{N}$. We present a discrepancy principle to determine $r$ and also an stability result for the new imaging function $W_{\text{LDSM}}(z)$ given by \eqref{DSMdef}.

\subsection{Determination of the parameter $r\in \mathbb{N}$ and stability result}
\label{rdefine}
Here we will assume that we have the perturbed far-field operator $F^\delta = F+\mathcal{O}(\delta)$ as $\delta \to 0$. The known $\delta \in (0,1)$ represents the noise level from our measured far-field data. Now, that we have derived our new sampling method we consider the imaging function where we use $F^\delta_{\#}$, as well as address how to determine the parameter $r\in \N$. To this end, we develop a discrepancy principle using the principle eigenvalue $\lambda_1$. We consider solving
\begin{equation}
  \Gamma_r(\lambda_1)-\Gamma_{r+1}(\lambda_1)=\delta 
  \label{gammateps}
\end{equation}
for $r$ i.e. we use $r$ iterations until we hit the noise level. Solving for $r$ in \eqref{gammateps} gives us that 
$$r=\frac{\ln\Big(\frac{\delta}{\beta\sqrt{\lambda_1}}\Big)}{\ln\Big(1-\beta\lambda_1\Big)}.$$
In order to insure that $r\in \mathbb{N}$ the chosen regularization parameter is given by
\begin{equation}
r=\text{max}\left\{\ \left \lceil{\frac{\ln\Big(\frac{\delta}{\beta\sqrt{\lambda_1}}\Big)}{\ln\Big(1-\beta\lambda_1\Big)}}\right \rceil, 1\right\}.    \label{rdef}
 \end{equation} 
From here we have a method to pick the parameter $r \in\N$ with respect to the known noise level. In our numerical experiments, we noticed that this choice of $r\leq5$.

 Before proceeding with the numerical examples, we address the stability of the imaging function $W_{\text{LDSM}}(z)$ given by \eqref{DSMdef} with respect to a given/measured perturbed far-field operator
 It is well known that if  
 $$\|F^{\delta}-F\|<\delta \quad \text{ we have that} \quad  \|F_{\#}^{\delta}-F_{\#}\|<C(1+|\ln(\delta)|)\delta$$ 
 for some $C>0$ independent of $\delta$ see for e.g. \cite{RegFM-AL}. We present a lemma that will address an important property before showing the stability result.  
 \begin{lemma}
 Assume that $P_{r,\epsilon}(F_{\#}): L^2({\mathbb{S}^{d-1}}) \to L^2({\mathbb{S}^{d-1}})$ is defined as above in \eqref{polyF}, then we have that 
 \begin{align*}
     \|P_{r,\epsilon}(F^{\delta}_{\#})-P_{r,\epsilon}(F^{}_{\#})\|\leq C\sum_{m=0}^{p-1}\left\|(F^{\delta}_{\#})^{m}-(F_{\#})^m\right\|\leq C(1+\ln(\delta))\delta.
 \end{align*}
 \label{polyfact}
 \end{lemma}
 \begin{proof}
     To begin the argument, we make the observation that we can always factorize terms of the form $$(F^{\delta}_{\#})^p-(F_{\#})^p=(F^{\delta}_{\#}-F_{\#})\sum_{m=0}^{p-1}(F_{\#}^{\delta})^m(F_{\#})^{p-1-m}$$
     where we define $Q_{p-1}(F^{\delta}_{\#},F_{\#})=\sum_{m=0}^{p-1}(F_{\#}^{\delta})^m(F_{\#})^{p-1-m}$ which is a polynomial of two variables and has degree $p-1.$ We focus our attention to the following term 
     \begin{align*}
         \| (F^{\delta}_{\#})^p-(F_{\#})^p\|&=\| Q_{p-1}(F^{\delta}_{\#},F_{\#})(F^{\delta}_{\#}-F_{\#})\|\\
         &\leq \| Q_{p-1}(F^{\delta}_{\#},F_{\#})\| (1+|\ln(\delta)|)\delta\\
         & \leq C\| F_{\#}\|^{p-1}(1+|\ln(\delta)|)\delta
     \end{align*}
     where $\| F_{\#}\|^{p-1}$ is bounded. Thus, we have $\| (F^{\delta}_{\#})^p-(F_{\#})^p\|\leq C(1+|\ln(\delta)|)\delta$ where $C$ is a constant independent of $\delta$.
 \end{proof}

With this result we are now able to prove stability of the imaging function $W_{\text{LDSM}}(z)$ defined in \eqref{DSMdef}. Here was assume that only the perturbed operator $F^\delta$ is known and we prove that the imaging function using the perturbed operator is uniformly close to the imaging function using the unperturbed operator. 

\begin{theorem}
    Assume that $\|F^{\delta}-F\|<\delta$ as $\delta\longrightarrow 0$, then 
    $$\Big|\|P_{r,\epsilon}(F^{\delta}_\#)\phi_z\|^2_{L^2({\mathbb{S}^{d-1}})}- \|P_{r,\epsilon}(F^{}_\#)\phi_z\|^2_{L^2({\mathbb{S}^{d-1}})}\Big|\leq C(1+|\ln(\delta)|)\delta  \quad \text{as }\quad \delta\longrightarrow 0$$
    uniformly on compact subsets of $\R^d$. 
\end{theorem}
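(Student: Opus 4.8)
The plan is to reduce the difference of squared norms to the operator-norm estimate already furnished by Lemma \ref{polyfact}. Writing $a_\delta := P_{r,\epsilon}(F^\delta_\#)\phi_z$ and $a := P_{r,\epsilon}(F_\#)\phi_z$ (all norms below are in $L^2(\mathbb{S}^{d-1})$), I would first exploit the elementary identity
$$\big| \|a_\delta\|^2 - \|a\|^2 \big| = \big| \|a_\delta\| - \|a\| \big| \cdot \big( \|a_\delta\| + \|a\| \big)$$
and then apply the reverse triangle inequality $\big| \|a_\delta\| - \|a\| \big| \le \|a_\delta - a\|$ to the first factor. This isolates the two quantities that must be controlled: the difference $\|a_\delta - a\|$, which should carry the $\delta$-decay, and the sum $\|a_\delta\| + \|a\|$, which should be bounded uniformly on a compact set.

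For the difference I would estimate
$$\|a_\delta - a\| = \big\| \big( P_{r,\epsilon}(F^\delta_\#) - P_{r,\epsilon}(F_\#) \big)\phi_z \big\| \le \big\| P_{r,\epsilon}(F^\delta_\#) - P_{r,\epsilon}(F_\#) \big\| \, \|\phi_z\|_{L^2(\mathbb{S}^{d-1})}$$
and invoke Lemma \ref{polyfact} for the operator-norm factor, giving $C(1+|\ln(\delta)|)\delta \, \|\phi_z\|_{L^2(\mathbb{S}^{d-1})}$. Since $\phi_z(\hat{y}) = \text{e}^{-\text{i}kz\cdot\hat{y}}$ has $\|\phi_z\|^2_{L^2(\mathbb{S}^{d-1})} = 2^{d-1}\pi$ independent of $z$, this contributes precisely the $C(1+|\ln(\delta)|)\delta$ decay with a $z$-independent constant.

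It remains to show the sum $\|a_\delta\| + \|a\|$ is bounded uniformly in $z$ over a fixed compact set $\mathcal{K} \subset \R^d$ and for all small $\delta$. For the unperturbed term I would use the chain of estimates leading to Theorem \ref{distD}, namely $\|a\|^2 \le r^2\beta^2 |(F_\#\phi_z,\phi_z)_{L^2(\mathbb{S}^{d-1})}| + \mathcal{O}(\epsilon)$ together with the upper bound in \eqref{imgfuncbounds}, which controls $\|a\|^2$ by $c_2\big( \|v_{\phi_z}\|^2_{L^2(D)} + \|v_{\phi_z}\|^2_{L^2(\partial D)} \big)$; since $v_{\phi_z}$ is given by the Funk--Hecke formula \eqref{Herglotz} and depends continuously on $z$, its norms are bounded over $\mathcal{K}$, so $\sup_{z\in\mathcal{K}}\|a\| \le M_\mathcal{K} < \infty$. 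For the perturbed term I would write $\|a_\delta\| \le \|a\| + \|a_\delta - a\|$ and use the bound on $\|a_\delta - a\|$ established above (which tends to $0$ as $\delta \to 0$) to conclude that $\|a_\delta\| \le M_\mathcal{K} + 1$ for all sufficiently small $\delta$.

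Combining the three ingredients yields
$$\big| \|a_\delta\|^2 - \|a\|^2 \big| \le \|a_\delta - a\| \, (\|a_\delta\| + \|a\|) \le C(1+|\ln(\delta)|)\delta$$
with $C$ depending only on $r,\beta,d,\epsilon$ and the set $\mathcal{K}$, which is the claimed uniform stability. I expect the main obstacle to be the uniform boundedness of $\|a_\delta\| + \|a\|$ over $\mathcal{K}$: the operator-norm lemma handles the $\delta$-dependence cleanly, but one must verify that the constant $M_\mathcal{K}$ coming from \eqref{imgfuncbounds} and the continuity of $z \mapsto v_{\phi_z}$ genuinely produce a bound independent of $z$ across $\mathcal{K}$, and that the threshold on $\delta$ in the perturbed estimate can be chosen uniformly in $z$ as well.
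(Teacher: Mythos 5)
Your proposal is correct and follows essentially the same route as the paper: both arguments reduce the difference of squared norms to the operator-norm estimate of Lemma \ref{polyfact} multiplied by factors that are bounded uniformly in $z$ and $\delta$, using $\|\phi_z\|^2_{L^2(\mathbb{S}^{d-1})}=2^{d-1}\pi$. The only cosmetic differences are that you split via the difference-of-squares factorization and the reverse triangle inequality where the paper adds and subtracts inside the inner product and applies Cauchy--Schwarz, and that you bound $\|P_{r,\epsilon}(F_\#)\phi_z\|$ through the Funk--Hecke/factorization estimates where the paper simply notes that the operator norms $\|P_{r,\epsilon}(F_\#^\delta)\|$ and $\|P_{r,\epsilon}(F_\#)\|$ are bounded.
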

\begin{proof}
  Using the $L^2(\mathbb{S}^{d-1})$ norm and its inner product we have the following inequalities 
\begin{align*}
&\Big|\|P_{r,\epsilon}(F^{\delta}_\#)\phi_z\|^2_{L^2({\mathbb{S}^{d-1}})}-\|P_{r,\epsilon}(F^{}_\#)\phi_z\|^2_{L^2({\mathbb{S}^{d-1}})}\Big|\\
&\leq  \Big| \| P_{r,\epsilon}(F_{\#}^{\delta})\phi_z \|_{L^2({\mathbb{S}^{d-1}})} \|(P_{r,\epsilon}(F_{\#}^{\delta})-P_{r,\epsilon}(F_{\#}))\phi_z\|_{L^2({\mathbb{S}^{d-1}})}\\
&\hspace{5cm}+\| P_{r,\epsilon}(F_{\#}^{\delta})-P_{r,\epsilon}(F_{\#}))\phi_z\|_{L^2({\mathbb{S}^{d-1}})}\| P_{r,\epsilon}(F^{}_{\#})\phi_z\|_{L^2({\mathbb{S}^{d-1}})}\Big|\\
&\leq \Big(\| P_{r,\epsilon}(F_{\#}^{\delta})\| \| P_{r,\epsilon}(F_{\#}^{\delta})-P_{r,\epsilon}(F_{\#})\|+\|P_{r,\epsilon}(F_{\#}^{\delta})-P_{r,\epsilon}(F_{\#})\| \| P_{r,\epsilon}(F^{}_{\#})\|\Big)\|\phi_z\|^2_{L^2({\mathbb{S}^{d-1}})}
\end{align*}
where on the second line we have added and subtracted terms and used the Cauchy–Schwarz inequality.  It is clear from \eqref{polyF} and Lemma \ref{polyfact} that $\| P_{r,\epsilon}(F_{\#}^{\delta})\|$ and $\| P_{r,\epsilon}(F^{}_{\#})\|$ are both bounded with respect to $\delta \in (0,1)$. Thus we have that 
$$\Big|\|P_{r,\epsilon}(F^{\delta}_\#)\phi_z\|^2_{L^2({\mathbb{S}^{d-1}})}-\|P_{r,\epsilon}(F^{}_\#)\phi_z\|^2_{L^2({\mathbb{S}^{d-1}})}\Big|\leq C  \| P_{r,\epsilon}(F_{\#}^{\delta})-P_{r,\epsilon}(F^{}_{\#})\|.$$
 Using Lemma \eqref{polyfact} we have  $\| P_{r,\epsilon}(F_{\#}^{\delta})-P_{r,\epsilon}(F^{}_{\#})\|\leq C(1+|\ln(\delta)|)\delta$ as $\delta\longrightarrow 0$. This last inequality is the final item to show the desired stability. Thus we have 
 $$\Big|||P_{r,\epsilon}(F^{\delta}_\#)\phi_z||^2_{L^2({\mathbb{S}^{d-1}})}-||P_{r,\epsilon}(F_\#)\phi_z||^2_{L^2({\mathbb{S}^{d-1}})}\Big|\leq C(1+|\ln(\delta)|)\delta \quad \text{as }\quad \delta\longrightarrow 0$$
 proving the claim.
\end{proof}
The stability result closes up the analysis about the Landweber direct sampling method connecting this direct sampling method and factorization method. In the following section we present numerical results using the imaging function to recover multiple types of scatterers. 

\section{Numerical Validation}
\subsection{Boundary Integral Equations}
We first derive the boundary integral equation to compute far-field data for arbitrary domains in two dimensions which are defined through a smooth parametrization. Note that the derivation is also valid in three dimensions by changing the corresponding fundamental solution in the integral operators. 

Recall, that the given scatterer $D$ is illuminated by an incident plane wave of the form $u^i=\text{e}^{\text{i} k x \cdot \hat{y} }$ with incident direction $\hat{y}\in \mathbb{S}^{1}$ (the unit circle), then the direct scattering problem is given by: find the total field $u \in H^1(D)$ and scattered field $u^s \in H^1_{loc}(\R^d \setminus \overline{D})$ such that  
\begin{align}
\Delta u^s +k^2 u^s=0  \quad \textrm{ in }  \R^2 \setminus \overline{D}  \quad  \text{and} \quad  \Delta u +k^2 nu=0  \quad &\textrm{ in } \,  {D}  \label{direct1new}\\
 (u^s+u^i )^+ - u^-=0  \quad  \text{and} \quad \partial_\nu (u^s+u^i )^+ + \eta (u^s+u^i )^+=  {\partial_{\nu} u^-} \quad &\textrm{ on } \,  \partial D \label{direct3new}\\
 \lim\limits_{r \rightarrow \infty} r^{1/2} \left( {\partial_r u^s} -\text{i} k u^s \right)=0 \label{SRCnew}&\,.
\end{align}
We use a single-layer ansatz to derive a $2\times 2$ system of boundary integral equations. Precisely, we take
\begin{eqnarray}
 u^s(x)=\mathrm{SL}_{k}\varphi(x)\,,\quad x\in\R^2 \setminus \overline{D}\qquad\text{and}\qquad u(x)=\mathrm{SL}_{k\sqrt{n}}\psi(x)\,,\qquad x\in D\,,
 \label{start}
\end{eqnarray}
where 
\[\mathrm{SL}_{k}\phi(x)=\int_{\partial D}\Phi_k(x,y)\phi(y)\,\mathrm{d}s\,,\qquad  x\notin \partial D\]
with $\Phi_k(x,y)$ the fundamental solution of the Helmholtz equation in two dimensions. Here, $\varphi$ and $\psi$ are yet unknown functions on $\partial D$. On the boundary, we have 
\[u^s(x)=\mathrm{S}_{k}\varphi(x)\qquad\text{and}\qquad  u(x)=\mathrm{S}_{k\sqrt{n}}\psi(x)\,,\]
where
\[\mathrm{S}_{k}\phi(x)=\int_{\partial D}\Phi_k(x,y)\phi(y)\,\mathrm{d}s\,,\qquad x\in \partial D\,.\]
Because of $u-u^s=u^i$, we obtain the first boundary integral equation
\begin{eqnarray}
 \mathrm{S}_{k\sqrt{n}}\psi-\mathrm{S}_{k}\phi=u^i\,.
 \label{first}
\end{eqnarray}
Taking the normal derivative of (\ref{start}) and the jump conditions yields on the boundary
\[\partial_\nu u^s(x)=\left(-\frac{1}{2}\mathrm{I}+\mathrm{K}'_{k}\right)\varphi(x)\qquad\text{and}\qquad  \partial_\nu u(x)=\left(\frac{1}{2}\mathrm{I}+\mathrm{K}'_{k\sqrt{n}}\right)\psi(x)\,,\]
where 
\[\mathrm{K}'_{k}\phi(x)=\int_{\partial D}\partial_{\nu(x)}\Phi_k(x,y)\phi(y)\,\mathrm{d}s\,,\qquad x\in \partial D\,.\]
Because of $\partial_\nu u-\partial_\nu u^s-\eta u^s=\partial_\nu u^i+\eta u^i$, we obtain the second boundary integral equation
\begin{eqnarray}
\left(\frac{1}{2}\mathrm{I}+\mathrm{K}'_{k\sqrt{n}}\right)\psi-\left(-\frac{1}{2}\mathrm{I}+\mathrm{K}'_{k}\right)\varphi-\eta \mathrm{S}_{k}\varphi=\partial_\nu u^i+\eta u^i\,. 
\label{second}
\end{eqnarray}
After we solve (\ref{first}) and  (\ref{second}) for $\psi$ and $\varphi$, we obtain the far-field by computing
\[u^\infty(\hat{x})=\mathrm{S}_k^\infty \varphi (\hat{x})\,,\]
where 
\begin{eqnarray}
\mathrm{S}^\infty_k\phi(\hat{x})=\int_{\partial D} \mathrm{e}^{-\mathrm{i}k \hat{x}\cdotp y}\phi(y)\,\mathrm{d}s(y)\,,\quad \hat{x}\in \mathbb{S}^1\,.
\label{farfieldexpression}
\end{eqnarray}

The system of boundary integral equations (\ref{first}) and (\ref{second}) is numerically solved with the boundary element collocation method (refer also to \cite{hotspot} for more details). Likewise, the expression (\ref{farfieldexpression}) is approximated.

To test that our solver produces correct results, we derive the corresponding far-field pattern for a disk with radius $R>0$.
The Jacobi-Anger expansion for the incident wave $u^i(x)=\mathrm{e}^{\mathrm{i}kx\cdotp \hat{y} }$ with incident direction $\hat{y} $ is given by
\begin{eqnarray*}
 \mathrm{e}^{\mathrm{i}kx\cdotp \hat{y} }=\sum_{p=-\infty}^\infty \mathrm{i}^p J_p(k|x|) \mathrm{e}^{\mathrm{i}p (\theta -\phi)}\,,
\end{eqnarray*}
where $\theta$ is the polar angle for $x$ and $\phi$ is the polar angle for $\hat{y} $. The scattered field in the exterior is given by
\begin{eqnarray*}
 u^s(r\hat{x})=\sum_{p=-\infty}^\infty \mathrm{i}^p a_pH_p^{(1)}(kr)\mathrm{e}^{\mathrm{i}p(\theta -\phi)}\,,\qquad r>R\,,
\end{eqnarray*}
where $\hat{x}=x/r\in \mathbb{S}^1$. The field inside of $D$ is given by
\begin{eqnarray*}
 u(r\hat{x})=\sum_{p=-\infty}^\infty \mathrm{i}^p b_pJ_p(k\sqrt{n}r)\mathrm{e}^{\mathrm{i}p(\theta -\phi)}\,,\qquad r<R\,.
\end{eqnarray*}
The first boundary condition $u^s-u=-u^i$ yields
\begin{eqnarray}
 H_p^{(1)}(kR)a_p-J_p(k\sqrt{n}R)b_n=-J_p(kR)\,.
 \label{line1}
\end{eqnarray}
The second boundary condition $\partial_\nu u^s+\eta u^s- \partial_\nu u=-\partial u^i-\eta u^i$ gives
\begin{eqnarray}
 kH_p^{(1)'}(kR)a_p+\eta H_p^{(1)}(kR)a_p- k\sqrt{n}J_p'(k\sqrt{n}R)b_p=-kJ_p'(kR)-\eta J_p(kR)\,.
 \label{line2}
\end{eqnarray}
Equations (\ref{line1}) and (\ref{line2}) can be written as
\begin{align*}
 \left(
 \begin{array}{cc}
  H_p^{(1)}(kR) & -J_p(k\sqrt{n}R)\\
  kH_p^{(1)'}(kR)+\eta H_p^{(1)}(kR) & - k\sqrt{n}J_p'(k\sqrt{n}R)
 \end{array}
 \right)\left(
 \begin{array}{c}
  a_p\\
b_p
  \end{array}
 \right)= \left(
 \begin{array}{c}
  -J_p(kR)\\
  -kJ_p'(kR)-\eta J_p(kR)
 \end{array}
 \right)\,.
\end{align*}
The solution $a_p$ (using Cramer's rule) is given by
\begin{eqnarray*}
 a_p=-\frac{
  k\sqrt{n}J_p(kR)J_p' (k\sqrt{n}R)-J_p(k\sqrt{n}R)\left(kJ_p'(kR)+\eta J_p(kR)\right)
 }{
  k\sqrt{n}H_p^{(1)}(kR)J_p'(k\sqrt{n}R)-J_p(k\sqrt{n}R)\left(kH_p^{(1)'}(kR)+\eta H_p^{(1)}(kR)\right)
 }\,.
\end{eqnarray*}
The far-field is expressed by
\begin{eqnarray}
 u^\infty(\hat{x}, \hat{y})=\frac{4}{\mathrm{i}}\sum_{p=-\infty}^\infty a_p \mathrm{e}^{\mathrm{i}p(\theta -\phi)}\,.
 \label{farfieldseries}
\end{eqnarray}
Let ${\bf F}_k\in \mathbb{C}^{64 \times 64}$ be the matrix containing the far-field data for $64$ equidistant incident directions and $64$ evaluation points for the disk with radius $R$ with parameters, $\eta$, $n$ and given wave number $k$ obtained by (\ref{farfieldseries}). We denote by ${\bf F}_k^{(N_f)}$ the far-field data obtained through the boundary element collocation method, where $N_f$ denotes the number of faces in the method. Note that the number of collocation nodes is $3\cdotp N_f$. The absolute error is defined by 
$$\varepsilon_k^{(N_f)} :=\|{\bf F}_k-{\bf F}_k^{(N_f)}\|_2 .$$
In Table \ref{tablefarfield}, we show the absolute error of the far-field for 64 incident directions and 64 evaluation point, for a disk with radius $R=1$ and the parameters $\eta=2+\mathrm{i}$, and $n=4+\mathrm{i}$ and the wave numbers $k=2$, $k=4$, and $k=6$. As we can observe, we obtain very accurate results using $120$ collocation nodes.

\begin{table}[!ht]
\centering
 \begin{tabular}{c|c|c|c|}
  $N_f$ & $\varepsilon_2^{(N_f)}$ & $\varepsilon_4^{(N_f)}$ & $\varepsilon_6^{(N_f)}$ \\
  \hline 
  10 & 0.82745 & 9.75548 & 74.46130\\
20 & 0.01051 & 0.41988 & 3.07890\\
40 & 0.00089 & 0.00556 & 0.03872\\
80 & 0.00011 & 0.00018 & 0.00108\\
\hline
 \end{tabular}
 \caption{\label{tablefarfield}Absolute error of the far-field with 64 equidistant incident directions and 64 evaluation for the disk with $R=1$ and the parameters, $\eta=2+\mathrm{i}$, and $n=4+\mathrm{i}$ for varying number of faces (collocation nodes). The wave numbers are $k=2$, $k=4$, and $k=6$.}
\end{table}

\subsection{Numerical Examples}
For the numerical examples we will be using the discretized form of the operator $\textbf{F}_{\#}$ which we can get from the discretized far-field operator $\textbf{F}$ i.e. 
$$\textbf{F}=\Big[u^\infty(\hat{x}_i,\hat{y}_j)\Big]^{64}_{i,j=1}.$$ We can discretize such that   
$$\hat{x}_i=\hat{y}_i=(\cos(\theta_i),\sin(\theta_i))\quad \text{ where } \quad \theta_i=2\pi(i-1)/64 \,\, \text{ for} \,\,i=1,\dots,64.$$ 
We get then $\textbf{F}$ which is a $64 \times 64$ complex valued matrix with $64$ incident and observation directions. 
An additional component needed is the vector $\phi_z$ which we compute by
$$\boldsymbol{\phi}_z=\big(\text{e}^{-\text{i}k\hat{x}_1\cdot z},\dots, \text{e}^{-\text{i}k\hat{x}_{64} \cdot z}\big)^\top\hspace{.5cm}\text{where}\hspace{.5cm}z\in \mathbb{R}^2.$$ 
In order to model experimental error in the data we add random noise to the discretized far-field operator $\mathbf{F}$ such that
$$\mathbf{F}^{\delta}=\Big[ \mathbf{F}_{i,j}(1+\delta\mathbf{E}_{i,j})\Big]^{64}_{i,j=1}\hspace{.5cm}\text{where}\hspace{.5cm} \| \mathbf{E}\|_2=1.$$
Here, the matrix $\mathbf{E} \in \C^{64 \times 64}$ is taken to have random entries and $0<\delta \ll 1$ is the relative noise level added to the data. This gives that the relative error is given by $\delta$.

Thus, numerically we can approximate the imaging function by 
$$W_{\text{LDSM}}(z)=\left\|P_{r,\epsilon}(\textbf{F}^{\delta}_\#)\boldsymbol{\phi}_z\right\|^4_{L^2(\mathbb{S})}$$
where we use the 4--th power to increase the resolution. We need to numerically be able to compute the approximation polynomial $P_{r,\epsilon}(t)$ in order to continue discretizing the imagining functional. Recall, that the matrix $\textbf{F}^{\delta}_{\#}=|\text{Re}(\textbf{F}^{\delta})|+|\text{Im}(\textbf{F}^{\delta})|$ where we have that 
$$\text{Re}(\textbf{F}^{\delta})=\frac{1}{2} \big[\textbf{F}^{\delta}+(\textbf{F}^{\delta})^{*}\big] \quad \text{and} \quad \text{Im}(\textbf{F}^{\delta})=\frac{1}{2 \text{i}} \big[\textbf{F}^{\delta}-(\textbf{F}^{\delta})^{*}\big].$$ 
Here, the absolute value of the matrices are define via its eigenvalue decomposition. With the computed  $\textbf{F}^{\delta}_{\#}$ we compute the singular values denoted $s_j$ for $j=1 , \cdots , 64$. Thus, in order to use our Landweber direct sampling method, we need the construction of the polynomial 
$P_{r,\epsilon}(t)$ such that for all $t\in[0,s_1]$ approximates the function $\Gamma_{r}(t)$ defined in the previous section. For the experiments we construct the polynomial such that  
$$P_{r,\epsilon}(t)=\sum_{k=1}^Mc_kt^k\hspace{.5cm}\text{such that}\hspace{.5cm}P_{r,\epsilon}(t_{\ell})=\frac{1}{\sqrt{t_{\ell}}} \big(1-(1-\beta t_{\ell})^r\big),$$
where $M$ is the degree of the polynomial and $t_{\ell} \in [0,s_1]$ are the interpolation points. We consider three different interpolation points over the interval $[0,s_1]$; 
\begin{enumerate}
\item $\ell=1,\ldots,100$ and $t_{\ell}$ are equally spaced,
\item $t_{\ell}$ are the singular values of ${\bf F}_{\#}$ i.e. $t_j=s_j$, 
\item $t_{\ell}$ are the 32 Gaussian quadrature points on the interval $[0,s_1]$.
\end{enumerate} 
We compute the regularization parameter $r \in \N$ where it is defined in \eqref{rdef}. In addition, one uses a spectral cut-off to compute the coefficients $c_k$ where the cut-off parameter is fixed to be $10^{-8}$ in all the numerical examples. 

Once the approximating polynomial is computed, we can numerically approximate the new imaging function $W_{\text{LDSM}}(z)$. We will discuss the construction of the polynomial $P_{r,\epsilon}(t)$ in terms of the degree and how the choice affects the numerical examples.  As mentioned in Theorem \eqref{distD}, we will see in the examples the decay as the sampling point $z$ moves away from the scatterer/boundary using the approximation polynomial applied to the solution operator. We consider the following three domains: disk with radius one, a kite, and a peanut. Their respective parameterizations are given by 
$$\partial D=\big(\cos(\theta),\sin(\theta)\big)^\top, \quad \partial D=\big(-1.5\sin(\theta),\cos(\theta)+0.65\cos(2\theta)-0.65\big)^\top$$
and 
$$\partial D=2\sqrt{\frac{\sin(\theta)^2}{2}+\frac{\cos(\theta)^2}{10}}\big(\cos(\theta),\sin(\theta)\big)^\top.$$
Note, that for the kite and peanut shaped scatterers, the far-field data was computed as described in the previous section using $N_f=128$.
In all of our examples, we address the different ways of interpolating the polynomial $P_{r,\epsilon}(t)$ on the interval $[0,s_1]$ and use its construction and representation to approximate the solution operator. \\
   
\noindent\textbf{Example 1. Recovering a peanut region:}\\
\noindent For the peanut shaped domain, we assume that the refractive index is $n=4+\text{i}$ and boundary parameter $\eta=2+\text{i}.$ Here, we will take $k=2\pi$ as the wave number and we let $\delta=0.10$ which corresponds to the $10\%$ random noise added to the data. In this first example, we address the construction of the polynomial $P_{r,\epsilon}(t)$ with respect to the degree. The first image has an interpolating polynomial of degree $M=4$ and on the second image the degree of the polynomial is 6. 
\begin{figure}[ht]
\centering 
\includegraphics[width=15cm]{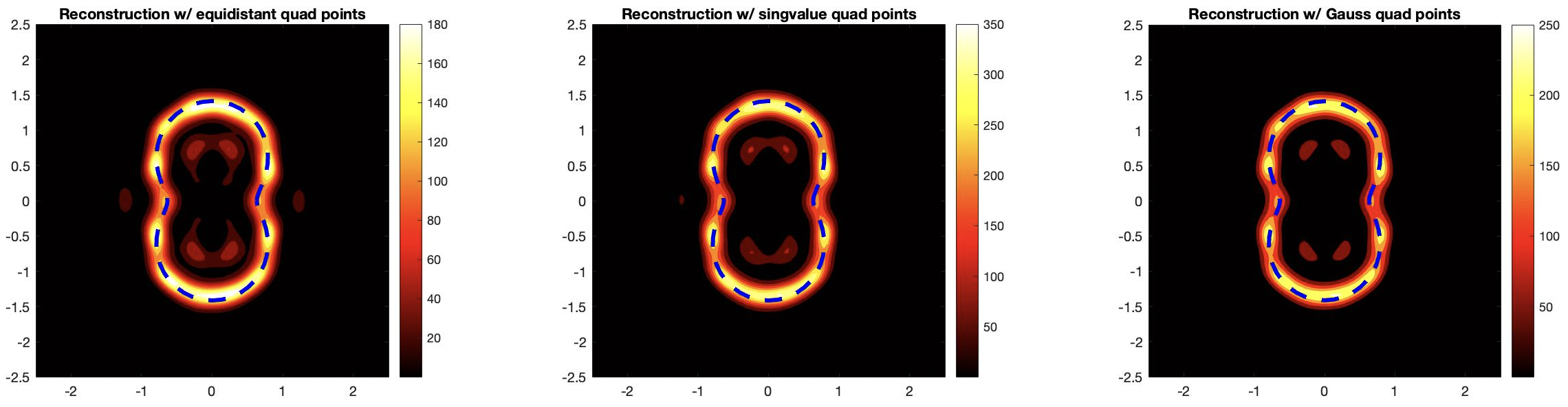}
\caption{Reconstruction using an interpolating polynomial of degree $M=4$ of peanut region by the Landweber direct sampling method. Images left to right:  reconstruction using equidistant points, singular values, and Gaussian quadrature points. }
\label{Peanut4}
\end{figure}

\begin{figure}[ht]
\centering 
\includegraphics[width=15cm]{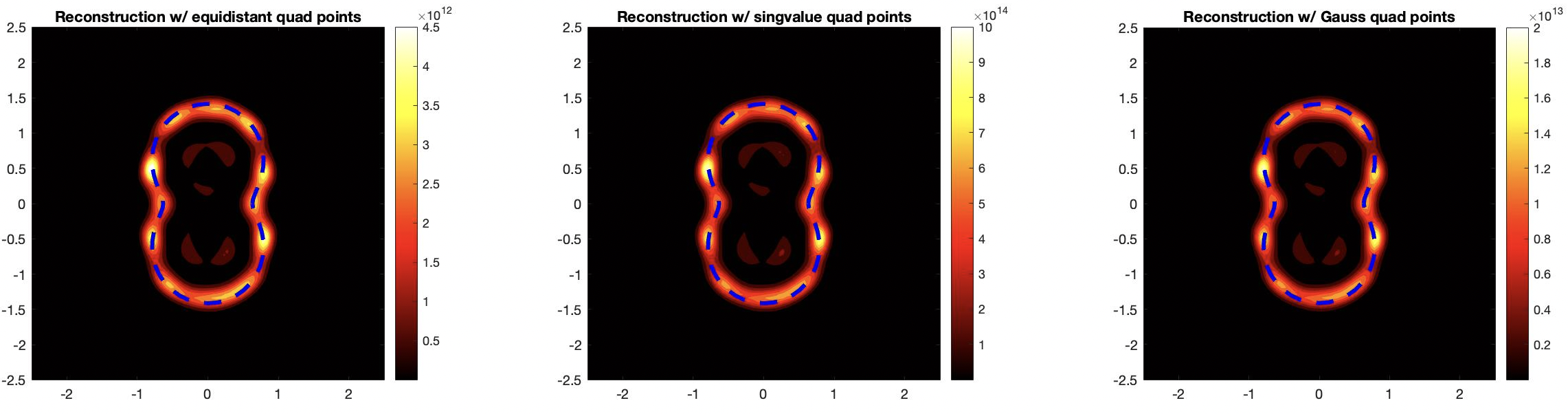}
\caption{Reconstruction using an interpolating polynomial of degree $M=6$ of peanut region by the Landweber direct sampling method. Images left to right:  reconstruction using equidistant points, singular values, and Gaussian quadrature points. }
\label{Peanut6}
\end{figure}

In Figure \eqref{Peanut4} and \eqref{Peanut6}, we see that both images are very similar and both give a good approximation of the scatterer. We tried many  degrees for the interpolating polynomial but we chose to present degree 4 and 6. With any degree, the only change we see is that the values at the boundary are higher. We can conclude that using any degree for the interpolating polynomial will be sufficient and enough to approximate the solution operator. Thus, without loss of generality for the rest of the numerical examples we assume that the degree of the polynomial can be taken to be $M=4$.\\

\noindent\textbf{Example 2. Recovering a peanut region with $20\%$ noise:}\\
\noindent For this reconstruction, we take the same values for the physical parameters as example 1. The difference here is that we fix the degree of the interpolating polynomial to be $M=4$, we do all the interpolating methods, and lastly we add $20\%$ random noise to the data. \\

\begin{figure}[ht]
\centering 
\includegraphics[width=15cm]{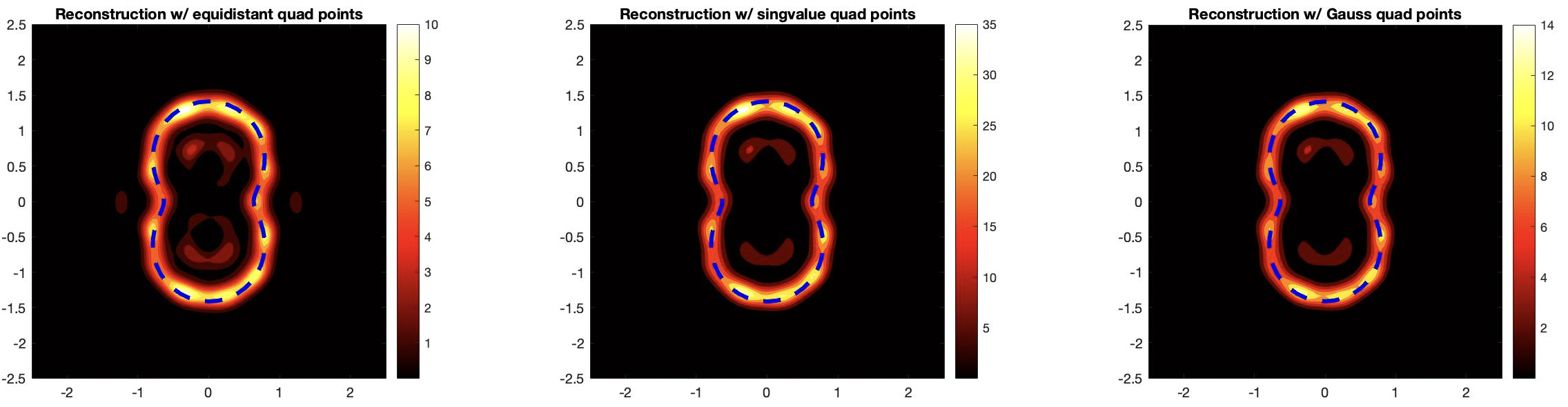}
\caption{Reconstruction using an interpolating polynomial of degree $M=4$ of peanut region by the Landweber direct sampling method with $20\%$ noise. Images left to right:  reconstruction using equidistant points, singular values, and Gaussian quadrature points. }
\label{peanut20per}
\end{figure}

In Figure \eqref{peanut20per}, we see that with even more random noise added, the reconstruction only changes with respect to the values at the boundary in comparison to Figure \eqref{Peanut4}. \\

\noindent\textbf{Example 3. Recovering a kite region:}\\
For this numerical experiment, we have fixed the degree of the approximation polynomial to be $M=4$, the refractive index to be $n=4+\text{i},$ and boundary parameter $\eta=2+\text{i}.$ Here, we will take $k=6$ as the wave number and $\delta=0.10$ which corresponds to the $10\%$ random noise added to the data. 

\begin{figure}[ht]
\centering 
\includegraphics[width=15cm]{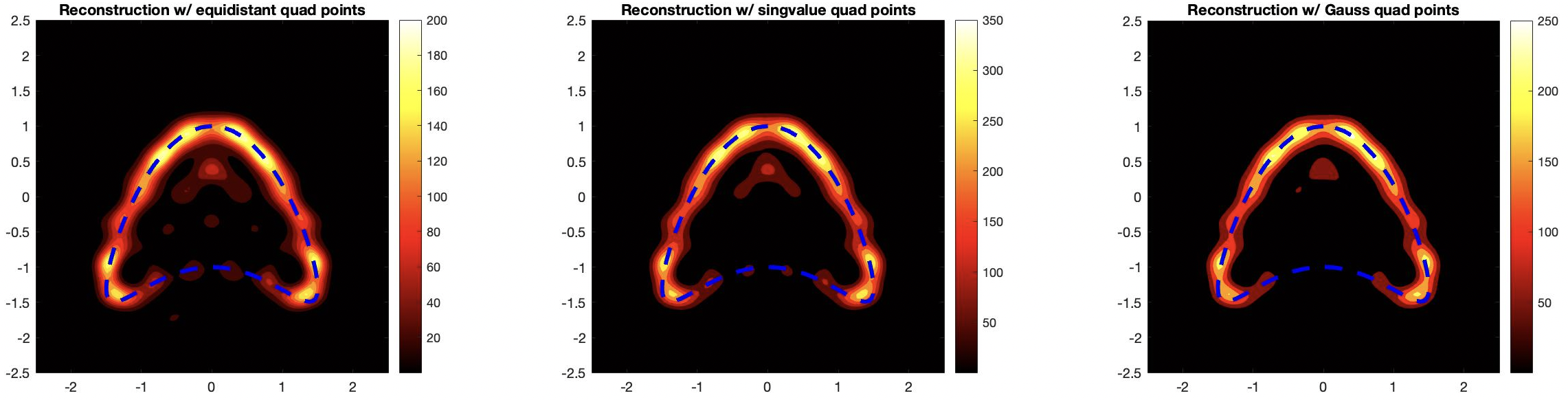}
\caption{Reconstruction using an interpolating polynomial of degree $M=4$ of kite scatterer by the Landweber direct sampling method with $10\%$ noise. Images left to right:  reconstruction using equidistant points, singular values, and Gaussian quadrature points.}
\label{kite10}
\end{figure}

In the next example, we compare \eqref{kite10} with the same reconstruction but using a noise level of $20\%$ and the wave number $k=2\pi.$

\begin{figure}[ht]
\centering 
\includegraphics[width=15cm]{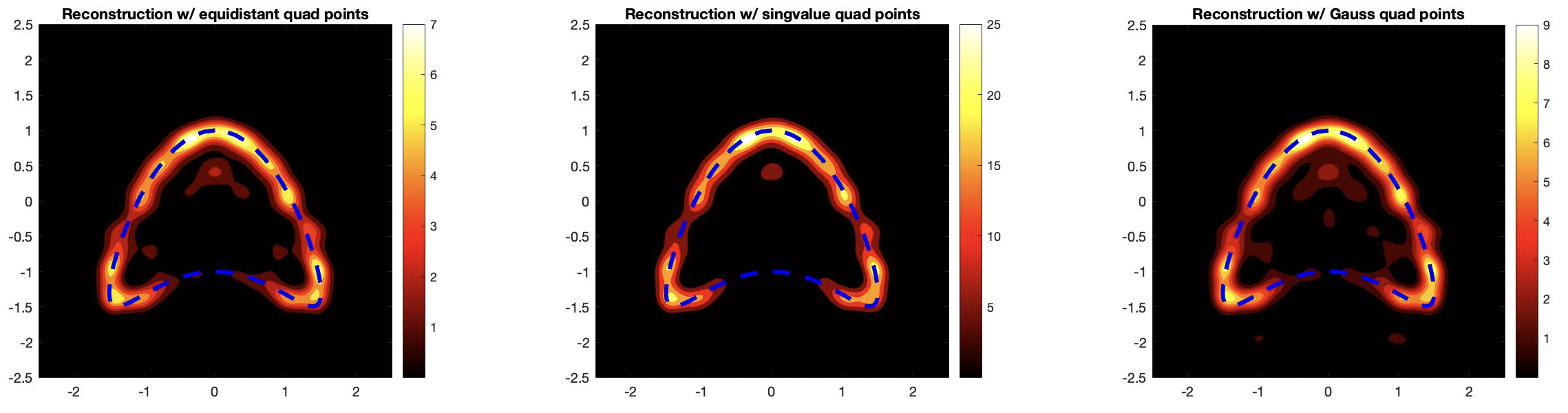}
\caption{Reconstruction using an interpolating polynomial of degree $M=4$ of kite scatterer by the Landweber direct sampling method with $20\%$ noise. Images left to right:  reconstruction using equidistant points, singular values, and Gaussian quadrature points.}
\label{kite20}
\end{figure}

In Figure \eqref{kite10} and \eqref{kite20}, both reconstructions are very similar. The change is based on the values at the boundary and how big they are. However, even with different noise levels we still capture most of the scatterers. For the last two reconstructions, we will analyze the unit circle and address a change of physical parameters to see how our indicator function performs when we modify these. \\

\noindent\textbf{Example 4. Recovering a circle region:}\\
For this numerical experiment, we have fixed the degree of the approximation polynomial to be $M=4$, the refractive index to be $n=3,$ and boundary parameter $\eta=6+4\text{i}.$ Here, we will take $k=4$ as the wave number and $\delta=0.15$ which corresponds to the $15\%$ random noise added to the data. 
\begin{figure}[ht]
\centering 
\includegraphics[width=15cm]{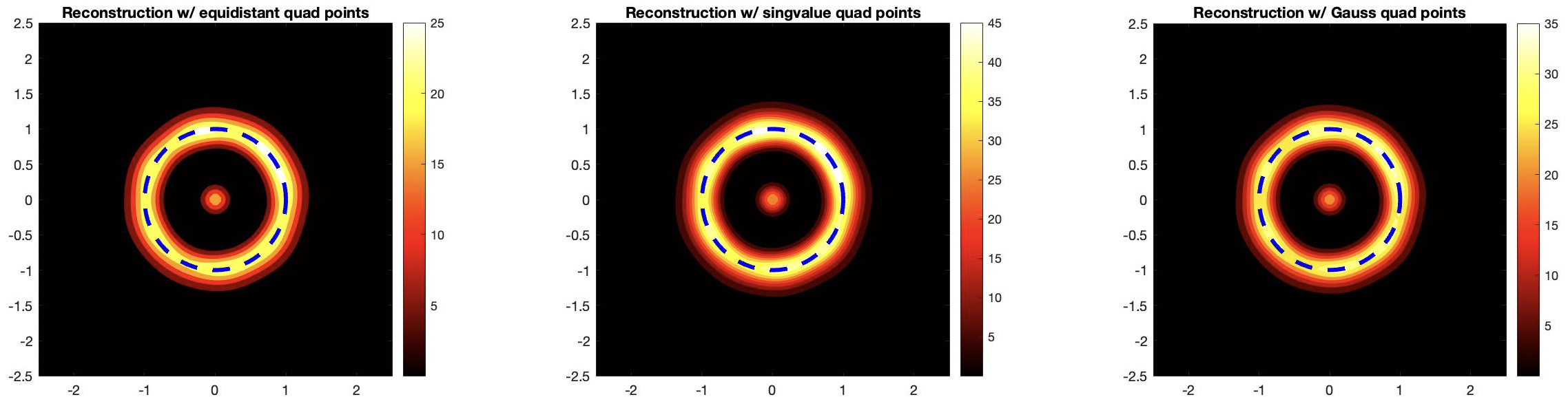}
\caption{Reconstruction using an interpolating polynomial of degree $M=4$ of circle scatterer by the Landweber iteration method with $15\%$ noise. Images left to right:  reconstruction using equidistant points, singular values, and Gaussian quadrature points.}
\label{circle1}
\end{figure}

For this last example, we change the physical parameters to be $n=5$ and $\eta=2.5+\text{i}$ and we keep the wave number and noise the same. 
\begin{figure}[ht]
\centering 
\includegraphics[width=15cm]{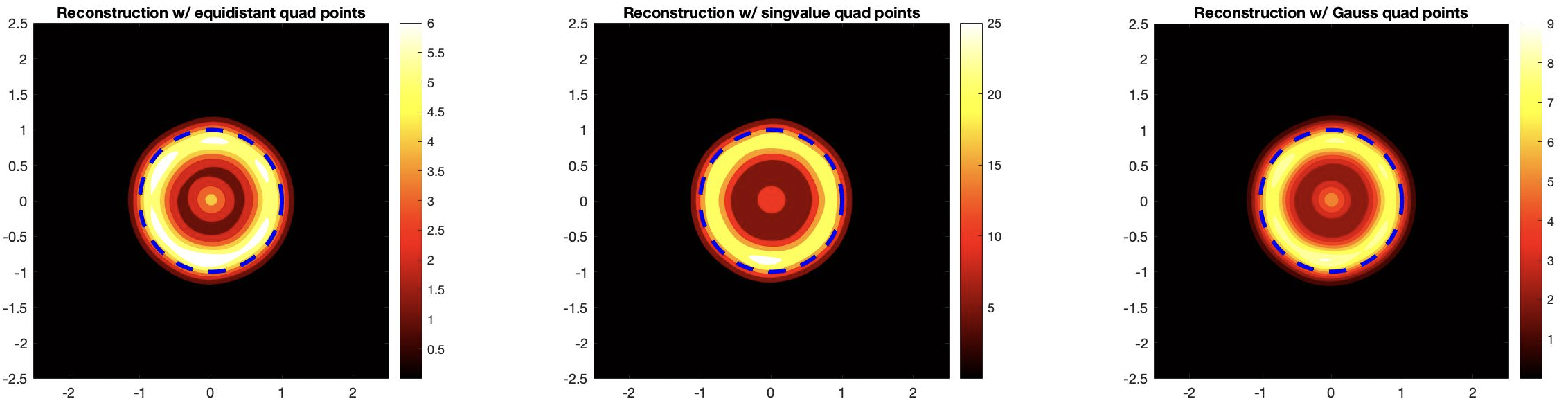}
\caption{Reconstruction using an interpolating polynomial of degree $M=4$ of circle scatterer by the Landweber iteration method with $15\%$ noise. Images left to right:  reconstruction using equidistant points, singular values, and Gaussian quadrature points.}
\label{circle2}
\end{figure}

We see that in both images, \eqref{circle1} and \eqref{circle2} the location of the scatterers are known. Although, changing the physical parameters gives us a better reconstruction in the second image, we still can fully reconstruct the boundary of the scatterer in the first example for the circle. In conclusion, our indicator function does perform well in terms of determining the location, the shape, and the size of the scatterer when varying either the noise level, the physical parameters, or the shape of the scatter. \\

\section{Conclusion}
In this study, we investigated a novel direct sampling method linked to the factorization method. This generalizes the work in \cite{harris-dsm} to the case when the scatterer has complex-valued coefficients i.e. $F$ may not be a diagonalizable operator. To achieve this, we developed a factorization of the far-field operator and then analyzed the operator to derive the new imaging function. We have derived the resolution analysis as well as the stability of the proposed reconstruction algorithm. A further extension to the work in \cite{harris-dsm} is the discrepancy principle used to determine the regularization parameter given in equation \eqref{rdef}. Also, a detailed numerical study is presented to show the stability and accuracy of the method. There are further questions to be explored for this scattering problem, such as: does the far-field data uniquely determine the coefficients as well as studying direct sampling methods for the case with two boundary parameters(see for e.g. \cite{fmconductbc,te-2cbc}).\\

\noindent{\bf Acknowledgments:} The research of R. Ceja Ayala and I. Harris is partially supported by the NSF DMS Grant 2107891.


\end{document}